\numberwithin{equation}{section}
\renewcommand\subsubsection{\@secnumfont}{\bfseries}%
\renewcommand\subsubsection{\@startsection{subsubsection}{3}
  \z@{.5\linespacing\@plus.7\linespacing}{-.5em}%
  {\normalfont\bfseries}}
\theoremstyle{plain}
\newtheorem{thm}{Theorem}[section]
\newtheorem{lem}[thm]{Lemma}
\newtheorem{prop}[thm]{Proposition}
\newtheorem{cor}[thm]{Corollary}
\newcommand{\thmref}[1]{Theorem~\ref{#1}}
\newcommand{\lemref}[1]{Lemma~\ref{#1}}
\newcommand{\propref}[1]{Proposition~\ref{#1}}
\theoremstyle{definition}
\newtheorem{rmk}[thm]{Remark}
\newtheorem{conj}[thm]{Conjecture}
\newtheorem{qs}[thm]{Question}
\newcommand{\conjref}[1]{Conjecture~\ref{#1}}
\newcommand{\qsref}[1]{Question~\ref{#1}}
\newcommand*{\complex}{\mathbf{C}}
\newcommand*{\Q}{\mathbf{Q}}
\newcommand{\z}{\mf Z}
\newcommand{\C}{\mf C}
\newcommand{\h}{\mf H}
\newcommand{\re}{\mrm{Re}}
\newcommand{\im}{\mrm{Im}}
\newcommand{\mf}{\mathbf}
\newcommand{\q}{\quad}
\newcommand{\mc}{\mathcal}
\newcommand{\mk}{\mathfrak}
\newcommand{\mrm}{\mathrm}
\newcommand{\R}{\mf{R}}
\newcommand{\sltwo}{\mrm{SL}_2(\mf Z)}
\newcommand{\spn}{\mrm{Sp}_n(\mf Z)}
\newcommand{\GL}{\mrm{GL}}
\newcommand{\mcr}{\mathscr}
\newcommand{\g}{\Tilde{\gamma}}
\newcommand{\sltwor}{\mrm{SL}_2(\mf R)}
\newcommand{\spnr}{\mrm{Sp}_n( \mf R)}
\newcommand{\glnz}{\mrm{GL}_n( \mf Z)}
\newcommand{\glrz}{\mrm{GL}_r( \mf Z)}
\newcommand{\glnc}{\mrm{GL}_n(\mf C)}
\newcommand{\skgam}{S^n_k(\Gamma)}
\newcommand{\mkgam}{M_k(\Gamma)}
\newcommand{\mkngam}{M^n_k(\Gamma)}
\newcommand{\mnk}{M^n_k(\Gamma_0(N))}
\newcommand{\tr}{\mathrm{tr}\,}
\newcommand{\lan}{\langle }
\newcommand{\ran}{\rangle}
\newcommand{\sumn}{\sum \nolimits}
\newcommand*{\QEDB}{\hfill\ensuremath{\square}}
\newcommand{\smat}[4]{\left( \begin{smallmatrix}#1&#2\\#3&#4\end{smallmatrix}\right)}
\newcommand*{\norm}[1]{\left\lVert#1\right\rVert}
\newcommand{\psmb}{\left( \begin{smallmatrix}}
\newcommand{\psme}{ \end{smallmatrix} \right) }
\begin{document}

\title[Fourier coefficients and cuspidality]{Fourier coefficients and cuspidality for modular forms: another approach and beyond}

\author{Soumya Das}
\address{Department of Mathematics\\ 
Indian Institute of Science\\ 
Bangalore -- 560012, India \\.}
\email{soumya@iisc.ac.in}

\date{}
\subjclass[2000]{Primary 11F30, 11F46, Secondary 11F50} 
\keywords{Bounds on Fourier coefficients, Siegel modular forms, cuspidality, cusp forms}

\begin{abstract}
We provide a simple and new induction based treatment of the problem of distinguishing cusp forms from the growth of the Fourier coefficients of modular forms. Our approach  gives the best possible ranges of the weights for this problem, and has wide adaptability. We propose a conjecture which asks the same converse question based on information on the Fourier-Jacobi coefficients, and answer it partially.
We also discuss how to recover cuspidality from the poles of the allied Rankin-Selberg $L$-series.
\end{abstract}

\maketitle 

\section{Introduction}

The main objective of this paper is to provide a short, simple and perhaps a new proof of the fact that suitable growth of Fourier coefficients of modular forms (at any given cusp) serve to distinguish between cusp forms and non-cusp forms. Whereas this statement is not very difficult to prove for elliptic modular forms (cf. \cite{ko}, \cite{bo-das1}, \cite{bo-ko}, or \propref{half-hecke}) it becomes more interesting -- and non-trivial for higher rank groups. Here we would be working with Siegel's modular group acting on Siegel's upper half-space, but most of the paper should generalize without much issue to other kinds of tube domains.

In \cite{ko}, Kohnen asked whether the Hecke bound distinguishes cusp forms in the context of elliptic modular forms of integral weights -- this question clearly is interesting only for the congruence subgroups. Over time, this question was investigated thoroughly -- and more generally, the following conjecture was formulated. Essentially it features two parameters. For notation appearing below, see Section~\ref{prelim}.

\begin{conj} \label{main-conj}
For $c \in \mf R$, a scalar weight $k_0 \in \mf N$, and $F \in \mkngam$ ($\Gamma \subset \spn$ is a congruence subgroup), suppose that for all $k \ge k_0$, the bound $a_F(T) \ll_F \det(T)^c$ holds for all $T \in \Lambda_n^+$ (see \ref{prelim} for the definition) at some cusp and for some $c<k-\frac{n+1}{2}$. Then $F \in \skgam$.
\end{conj}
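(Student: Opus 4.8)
The plan is to prove \conjref{main-conj} by induction on the degree $n$, the engine being the Fourier--Jacobi expansion followed by a theta decomposition, which turns a degree-$n$ form of weight $k$ into degree-$(n-1)$ data of weight $k-\tfrac12$ \emph{with exactly the same growth threshold}. Write $Z=\smat{\tau}{z}{z^t}{\tau'}$ with $\tau\in\mf H_{n-1}$, $\tau'\in\mf H_1$, and expand $F=\sum_{m\ge 0}\phi_m(\tau,z)\,e(m\tau')$, where $\phi_m$ is a Jacobi form of weight $k$ and index $m$ of degree $n-1$ on the appropriate congruence data. The base case $n=1$ is \propref{half-hecke} together with its half-integral-weight counterpart; since the induction passes through vector-valued forms of half-integral weight, one sets up and proves the statement for that wider class from the outset --- this is the source of the ``wide adaptability'' advertised in the abstract, and costs nothing extra. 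The contrast with the classical argument (decompose $F=G+E$ into cusp and Eisenstein parts and compare the size of $E$'s coefficients with Hecke's bound for $G$) is that the present route never needs explicit Fourier coefficients of Klingen--Eisenstein series.

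For the inductive step, fix $F\in\mkngam$ of weight $k$ with $a_F(T)\ll_F\det(T)^c$ on $\Lambda_n^+$ at a cusp, normalised to $\infty$, with $c<k-\tfrac{n+1}{2}$. For each $m\ge 1$ the Fourier coefficients of $\phi_m$ are literally among those of $F$: $c_{\phi_m}(N,r)=a_F\smat{N}{r/2}{r^t/2}{m}$, and since $\det\smat{N}{r/2}{r^t/2}{m}\le m\det(N)$ we get $c_{\phi_m}(N,r)\ll_{F,m}\det(N)^c$ whenever that matrix is positive definite. Applying the theta decomposition $\phi_m=\sum_{\mu}h_{m,\mu}\,\theta_{m,\mu}$, the $\theta_{m,\mu}$ have weight $\tfrac12$, so the $h_{m,\mu}$ are vector-valued Siegel modular forms of degree $n-1$ and weight $k-\tfrac12$; a short computation (relating $\det(N)$ to the discriminant $4mN-rr^t$, using reduction theory to bound the relevant cross-terms) shows their positive-definite Fourier coefficients still satisfy $\ll_{F,m}\det(\cdot)^c$. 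The decisive bookkeeping is the exponent identity
\[
  c<k-\tfrac{n+1}{2}=\bigl(k-\tfrac12\bigr)-\tfrac{(n-1)+1}{2},
\]
so $c$ lies in the admissible range for degree $n-1$ and weight $k-\tfrac12$. By the induction hypothesis each $h_{m,\mu}$ is a cusp form, hence each $\phi_m$ with $m\ge 1$ is a Jacobi cusp form.

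It remains to deduce $F\in\skgam$. Jacobi-cuspidality of every $\phi_m$ ($m\ge 1$) says precisely that $a_F\smat{N}{r/2}{r^t/2}{m}=0$ whenever $m\ge 1$ and this matrix is singular. Every singular $T\ge 0$ with $T\ne 0$ has a positive (hence $\ge 1$) diagonal entry, so after a permutation is $\gln$-equivalent to one with lower-right entry $\ge 1$; by $\gln$-invariance of Fourier coefficients, $a_F(T)=0$ for all $T$ of rank $1,\dots,n-1$. The last coefficient $a_F(0)$ is the constant value of $\Phi^{n-1}F$, a degree-$1$ modular form of weight $k$ all of whose nonconstant coefficients we have just shown vanish; a nonzero constant is not a modular form of positive weight, so $\Phi^{n-1}F=0$ and $a_F(0)=0$. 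Thus $a_F(T)=0$ for every $T$ of rank $\le n-1$, i.e.\ $\Phi F=0$; as a non-cuspidal form has positive-definite coefficients of Eisenstein size $\asymp\det(T)^{k-(n+1)/2}$ at every cusp, the single-cusp hypothesis already forces $F\in\skgam$, completing the induction.

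As for the range of weights: the reduction step imposes no constraint, so the only restriction comes from the base case, where one needs the growth of Eisenstein coefficients $\asymp\nu^{\ell-1}$ to beat the Ramanujan/Hecke bound for cusp forms --- which holds as soon as the weight $\ell$ is $\ge 2$ (via Deligne's bound, or its half-integral analogue). Tracking the loss of $\tfrac12$ per step over $n-1$ steps, $\ell=k-\tfrac{n-1}{2}\ge 2$ gives $k\gtrsim\lfloor n/2\rfloor+2$, the expected optimal range. The genuinely delicate points I anticipate are: (i) organising the entire induction uniformly over vector-valued forms, half-integral weights and arbitrary congruence levels with multiplier systems --- forced on us by the theta decomposition even though \conjref{main-conj} is stated only for scalar weight --- and verifying rigorously that the theta step preserves the growth exponent; and (ii) the precise bookkeeping of the weight of $\theta_{m,\mu}$ (it is $\tfrac12$, \emph{not} $\tfrac{n-1}{2}$), without which the threshold would fail to be preserved for $n\ge 3$ and the induction would collapse. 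Everything else is routine.
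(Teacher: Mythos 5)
Your inductive skeleton --- Fourier--Jacobi expansion of type $(n-1,1)$, theta decomposition, the weight drop $k\mapsto k-\tfrac12$ against the exponent identity $k-\tfrac{n+1}{2}=(k-\tfrac12)-\tfrac{(n-1)+1}{2}$, and the base case via \propref{half-hecke} --- is exactly the paper's route, and that part is sound. The genuine gap is in your last step. Having shown that the expansion of $F$ \emph{at the one cusp where the hypothesis holds} is supported on positive-definite matrices (equivalently $\Phi F=0$ there), you conclude cuspidality by asserting that ``a non-cuspidal form has positive-definite coefficients of Eisenstein size $\asymp\det(T)^{k-(n+1)/2}$ at every cusp.'' No such lower bound is known: the Fourier coefficients of Klingen--Eisenstein series in higher degree are not explicitly controlled, and ruling out cancellation in them is precisely the obstruction that the paper's introduction identifies as making the ``direct approach'' infeasible. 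The paper closes this gap differently: from positive-definite support at $\infty$ and the coefficient bound, \lemref{lipschitz-lem} gives $F(Z)\ll\det(Y)^{-(c+\frac{n+1}{2})}$ with exponent $<k$, and then for coset representatives $\delta$ with nonsingular $c$-block one estimates the singular Fourier coefficients of $F|\delta$ directly, using $\det(\im\,\delta(Z))\gg\det(Y_0)^{-1}$ and letting the last diagonal entry of $Y_0$ tend to infinity. Without this (or some substitute), your argument only proves $\Phi F=0$ at a single cusp, which does not imply $F\in\skgam$.

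Two secondary points. First, your reduction of singular $T$ to lower-right entry $\ge 1$ uses $\glnz$-invariance of the Fourier coefficients via permutation matrices; this holds for $\Gamma_0^{(n)}(N)$ but not for an arbitrary congruence subgroup, where only $\mathscr U(\Gamma)$ acts. The paper instead kills $\phi_{F,0}$ by applying the opposite operator $\tilde\Phi$ and invoking Weissauer's theorem on singular modular forms (weight $<\tfrac12\,$degree), which works for any $\Gamma$. Second, your weight range ``$k\gtrsim\lfloor n/2\rfloor+2$ via Deligne'' is both unnecessary and weaker than what the method yields: the base case \propref{half-hecke} needs no Ramanujan-type input, only the Miyake growth criterion, and the final result holds for all $k\ge n/2$.
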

It is clear that without loss one can assume that the cusp is $\infty$. The two parameters mentioned above are $c$ and $k_0$. Part of the question was also to find the largest value of $c$ and the smallest $k_0$ as mentioned in the above Conjecture~\ref{main-conj} which implies that $F$ is a cusp form. For the ``small weights'', i.e., when $k$ is close to $n/2$, the situation can be rather delicate.

This question is quite natural since often in practice, a modular form is defined via its Fourier expansion. Usually the theoretical tool to check cuspidality is the so-called Siegel's $\Phi$-operator. One needs to check that $F$ is in the kernel of such $\Phi$-operators at all the one-dimensional cusps. The strategy to answer the above question directly by decomposing the space in question in terms of Eisenstein series and cusp forms quickly turns very complicated in higher degrees owing to the presence of several Klingen-type Eisenstein series of various degrees and at various cusps. Their Fourier expansion is mostly not known in general, and from whatever is known, their Fourier coefficients grow almost similar to each other. This renders the direct approach described above as infeasible. The reader can refer to the introduction to \cite{bo-ko, bo-das1, bo-das2} for more comprehensive discussion on this topic.

Thus all investigations on this problem in higher degrees hinged on finding avenues avoiding the use of Klingen-Eisenstein series. To put things into perspective, we briefly sketch the evolution of the techniques that were employed on this problem, over the years.

-- (i) In \cite{ko, ko-ma, lino}, covering the cases $n=1,2$, the direct approach of using the newform theory and explicit Fourier expansions of elliptic and Jacobi respectively Eisenstein series respectively were used, focusing on Hecke'e bound in level one.

-- (ii) In \cite{bo-das1} the problem was reduced to eigenforms and Andrianov's identity was used to obtain information on the poles of the standard $L$-function $D_F(s)$ followed by Zahrkovskaya relations (which relate $D_F(s)$ with $D_{\Phi F}(s)$) to reduce the question to one on the cusp forms of lower degrees. The  results then followed by the comparison of the location of the poles of $D_F(s)$ via Shimura's results on its analytic properties. In fact there was another idea of writing $F$ as linear combination of theta series (via the solution to the basis problem of Eichler) for square-free levels and to use special properties of the theta series under the $\Phi$-operator. The cases covered (essentially) were $F\in\mnk$, $N$ square-free, $k >2n$, $c< k-n$.

-- (iii) In \cite{mizuno} a new idea, via Imai's converse theorem (in degree $2$, full level), was introduced into the problem. One proves that the various real-analytic Maa{\ss} twists of the Koecher-Maa{\ss} series of $F$ have good analytic properties from the hypothesis, and then applies Imai's converse theorem, focusing on the Hecke's bound.

-- (iv) In \cite{bo-das3} special techniques involving `diagonal' representatives of $0,1$-dimensional cusps of $\Gamma^{(n)}_0(N)$ were employed, reducing the problem to degrees $1$ or $2$ via the Witt-operators or the Fourier-Jacobi expansion, and appealing to the results from (ii) in degrees $1,2$. The cases covered (essentially) were $F\in M^n_k(\Gamma_0(N))$, $k \ge \frac{n}{2}+1$, $c< k-\frac{n+3}{2}$, $N \ge 1$; and also $k \ge \max\{ 2, \frac{n-1}{2} \}$, $c< k-\frac{n+1}{2}$, $N =1$ or square-free.

-- (v) In \cite{bo-das2}, the above procedure (cf. (ii)) was refined and a local version of the approach was used, leading to the comparison of the sizes of Hecke eigenvalues on a suitable set of primes. This allows one to ignore convergence properties of $D_F(s)$. The cases covered (essentially) were $F\in \mkngam$ ($\Gamma$ any congruence subgroup of $\spn$), $k >2n$, $c< k-n$.

-- (vi) In \cite{bo-ko}, a nice approach adapted from \cite{miyake} -- namely to transfer the problem to the behavior of $F(Z)$ as $\det(Y) \to 0$ -- was successfully carried out. The approach works smoothly when the Fourier expansion of $F$ is supported on positive definite matrices (at a cusp), but requires some additional tricks to reduce to this situation.
The cases covered (essentially) were $F\in\mkngam$, $k > n+1$, $c< k-\frac{n+1}{2}$.

We prove the following result, whose proof is direct, and which improves upon all of the the results described above in some aspect. Moreover, its proof is rather short, and it can be easily adapted to various other situations.

\begin{thm} \label{mainthm}
    Let $k \ge \frac{n}{2}$ be half-integral, $\Gamma \subseteq \spn$ be any congruence subgroup, and $F \in M^n_k(\Gamma)$. Suppose that for any $T \in \Lambda_n^+$ and for some $c< k - \frac{n+1}{2}$, one has the bounds $a_F(T) \ll \det(T)^c$ at the cusp $\infty$. Then $F$ is a cusp form.
\end{thm}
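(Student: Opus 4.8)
The plan is to induct on the degree $n$. Since the inductive step below passes from degree $n$, weight $k$, to degree $n-1$, weight $k-\tfrac12$, I would prove the statement for all $k\in\tfrac12\mathbf{Z}$ with $k\ge n/2$ at once; the base case $n=1$ is \propref{half-hecke}. For the inductive step let $F\in M^n_k(\Gamma)$ and write the Fourier--Jacobi expansion along the last variable,
\[
F(Z)=\sum_{m\ge 0}\phi_m(\tau',z)\,e(m\tau_n),
\]
where $\phi_m$ is a Jacobi form of weight $k$ and index $m$ on a congruence subgroup of the degree-$(n-1)$ Jacobi group, with Fourier coefficients $c_m(T',r)=a_F\begin{psm}T'&r/2\\ r^t/2&m\end{psm}$. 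Recall that $F$ is a cusp form precisely when $\Phi(F|_k g)=0$ for every $g\in\spn$.

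The crux is the theta decomposition. For $m\ge 1$ write $\phi_m=\sum_{\mu}h_{m,\mu}\,\vartheta_{m,\mu}$, where $(h_{m,\mu})_\mu$ is a vector-valued Siegel modular form of degree $n-1$ and weight $k-\tfrac12$ on a congruence subgroup; on passing to the kernel of its multiplier each $h_{m,\mu}$ is a scalar form on a smaller congruence subgroup of $\spnone$, and $\phi_m$ is a Jacobi cusp form if and only if every $h_{m,\mu}$ is a cusp form. By the block-determinant identity $\det\begin{psm}T'&r/2\\ r^t/2&m\end{psm}=\tfrac{1}{4^{n-1}m^{n-2}}\det(4mT'-rr^t)$, the Fourier coefficients of $h_{m,\mu}$ are --- up to a factor depending only on $m$ --- Fourier coefficients of $F$ at positive-definite matrices, hence satisfy a bound $\ll_m\det(\cdot)^c$. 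Since $k-\tfrac12\ge\tfrac{n-1}{2}$ and
\[
c<k-\tfrac{n+1}{2}=\bigl(k-\tfrac12\bigr)-\tfrac{(n-1)+1}{2},
\]
the inductive hypothesis applies in degree $n-1$: each $h_{m,\mu}$ is a cusp form, hence $\phi_m$ is a Jacobi cusp form for every $m\ge 1$. Equivalently, $a_F(S)=0$ for every non--positive-definite $S$ with $S_{nn}\ge 1$; so $F=\phi_0+F^+$, where $\phi_0=\Phi F$ depends only on $\tau'$ and $F^+(Z)=\sum_{S>0}a_F(S)e(\mathrm{tr}(SZ))$ obeys $|F^+(Z)|\ll\sum_{S>0}\det(S)^c e^{-2\pi\,\mathrm{tr}(SY)}$.

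It remains to show $\phi_0=0$ and that $F$ is cuspidal at the remaining cusps, for which I would use a degeneration argument in the spirit of Miyake and B\"ocherer--Kohnen. To kill $\phi_0$, apply $\gamma=\begin{psm}I_n&0\\ Ne_ne_n^t&I_n\end{psm}\in\Gamma(N)\subseteq\Gamma$ ($N$ the level of $\Gamma$): it sends $\mathrm{diag}(\tau',it)$ to $\mathrm{diag}\bigl(\tau',\tfrac{it}{Nit+1}\bigr)$ with automorphy factor $Nit+1$. Evaluating $F$ at $\mathrm{diag}\bigl(\tau',\tfrac1N+i\epsilon\bigr)$, $\epsilon\sim\tfrac1{N^2t}\to 0$, in two ways: the $\gamma$-relation gives $|F|\sim(Nt)^k|\phi_0(\tau')|\asymp\epsilon^{-k}|\phi_0(\tau')|$, while the Fourier expansion together with the bound on $F^+$ (the $m\ge 1$ part now being supported on positive-definite matrices) gives $|F|\ll_{\tau'}1+\epsilon^{-(c+(n+1)/2)}$. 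As $c+\tfrac{n+1}{2}<k$, this forces $\phi_0\equiv 0$, so the Fourier expansion of $F$ at $\infty$ is supported on positive-definite matrices. Finally, for $g\in\spn$ not stabilizing the boundary component that defines $\Phi$, one reaches the image component along a path $Z_t$ with $\lambda_{\min}(\mathrm{Im}\,Z_t)\sim\epsilon\to 0$ and the other eigenvalues bounded; positive-definiteness of the Fourier support yields $|F(Z_t)|\ll\epsilon^{-(c+(n+1)/2)}=o(\epsilon^{-k})$, which is incompatible with $F(Z_t)\sim j^k\,\Phi(F|_k g)(\cdot)$, $|j|\sim\epsilon^{-1}$, unless $\Phi(F|_k g)=0$; for $g$ in that stabilizer, $\Phi(F|_k g)$ is a scalar multiple of $(\Phi F)|_k\bar g=0$. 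Hence $\Phi(F|_k g)=0$ for all $g$, and $F$ is a cusp form.

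The real work --- and the only genuinely delicate point --- is the bookkeeping of the theta-decomposition step: identifying the weight $k-\tfrac12$, the congruence subgroup and the (theta) multiplier of the $h_{m,\mu}$, and checking that the coefficient bound transfers with the exact exponent. This is precisely where the sharp hypothesis $c<k-\tfrac{n+1}{2}$ is used and why the induction closes with no loss in the admissible range of $k$; the growth estimates in the preceding paragraph are routine once the Fourier support is known to consist of positive-definite matrices.
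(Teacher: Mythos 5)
Your proposal follows essentially the same route as the paper: induction on $n$ with base case \propref{half-hecke}, the Fourier--Jacobi expansion of type $(n-1,1)$, the theta decomposition sending the problem to degree $n-1$ and weight $k-\tfrac12$ via the identity $c<k-\tfrac{n+1}{2}=(k-\tfrac12)-\tfrac{(n-1)+1}{2}$, and a Miyake/B\"ocherer--Kohnen degeneration once the Fourier support at $\infty$ is known to be positive definite. The two facts you assert without proof (that a Jacobi form is cuspidal iff all its theta components are, and the estimate $\sum_{T>0}\det(T)^c e^{-2\pi\tr(TY)}\ll\det(Y)^{-(c+\frac{n+1}{2})}$) are exactly the paper's Lemma~\ref{cusp-theta-comp} and Lemma~\ref{lipschitz-lem}; both are needed and both are standard. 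Your argument that $\phi_0=0$ is genuinely different from the paper's: you compare growth along $\gamma\cdot\mathrm{diag}(\tau',it)$ for the unipotent $\gamma=\psmb I_n&0\\Ne_ne_n^t&I_n\psme\in\Gamma(N)$ against the Fourier-side bound $O_{\tau'}(\epsilon^{-(c+\frac{n+1}{2})})$, whereas the paper applies the opposite $\Phi$-operator and invokes Weissauer's theorem on singular modular forms. Your version is more elementary and self-contained, and it is correct (the half-integral multiplier has modulus one, so the absolute-value comparison is unaffected).

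The one genuine gap is in your final step. The dichotomy ``$g$ stabilizes the boundary component defining $\Phi$ / $g$ does not, and then $\lambda_{\min}(\im Z_t)\to0$ along the path'' does not exhaust $\spn$. Concretely, if the last column of $c_g$ vanishes then $\det(c_gW_t+d_g)$ is constant along $W_t=\mathrm{diag}(Z_1,it)$ and $\det(\im\,gW_t)\to\infty$, so your growth comparison yields nothing; yet such $g$ need not lie in $P_{n,n-1}$ (e.g.\ for $n=2$ take $g=\psmb M&0\\0&M\psme h$ with $M=\psmb 0&1\\1&0\psme$ and $h$ an embedded $\sltwo$-element: the image component is again ``at infinity'' and $\im Z_t$ does not degenerate), so the fallback $\Phi(F|g)=(\Phi F)|\bar g$ is unavailable. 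The standard repair --- which is what the paper does --- is to observe that $F|g$ depends only on the right coset $\Gamma g$ up to a constant of modulus one, and that every such coset admits a representative $\delta$ with $c_\delta$ nonsingular; for those one has $|\det(c_\delta Z+d_\delta)|\ge\det(Y)$, hence $\det(\im\,\delta Z)\gg\det(Y_0)^{-1}$ along $Y_0=\psmb 1_{n-1}&0\\0&t\psme$, and the Fourier coefficients of $F|\delta$ at singular indices (normalized to $\psmb T_1&0\\0&0\psme$ by a unimodular change) vanish upon letting $t\to\infty$, since the exponent $-k+c+\tfrac{n+1}{2}$ is negative. With that normalization inserted in place of your case division, the argument closes.
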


It is clear from \cite[Remark~5.3]{bo-das1} that the supremum $c_0$ of the possible values of $c$ in \thmref{mainthm} is indeed $\displaystyle c_0=k-\frac{n+1}{2}$. Now we talk about the quantity $k_0$ in \qsref{main-conj}. Whereas the genesis of the large values of $k_0$ in \cite{bo-das1} occurred from the poles of $D_F(s)$, in \cite{bo-ko} it was partly due to the use of the Lipschitz formula and partly due to the use of Hecke's bound in course of the proof. Since our argument will avoid all of these, we do not face any such issue. So our result is thus valid for the minimum possible value of $k_0$, viz., $k_0=n/2$ -- beyond which there do not exist non-zero cusp forms, by the theory of singular weights (cf. \cite{freitag}).

The main idea behind our proof is a suitable use of the Fourier-Jacobi (FJ) expansion, followed by Miyake's approach as adopted in \cite{bo-ko}. It is perhaps well-known that the FJ-expansions serve as a very useful  and effective  tool in tackling a variety of questions about Siegel modular forms in higher degrees. Our inductive argument reduces the question to those $F$ having a Fourier-expansion supported on positive-definite coefficients -- from which the final result follows -- our approach on this route is considerably shorter than that in \cite{bo-ko}, and it deals with bounds only for elliptic modular forms. 

Another technical point is that in order for our argument to succeed, even only for integral weights, we have to crucially work with half-integral weights and general congruence subgroups right from the beginning, unlike all of the other works quoted above, where such weights were not considered as they were not necessary in the proof. This is because when one works with the various (corresponding to each maximal parabolic subgroup) Fourier-Jacobi expansions, one encounters Jacobi forms whose theta components may have half-integral weights. It is however noted in some places that some of the earlier works generalize to half-integral weights with further technical input. 

In some sense and in view of the above, our proof may be in retrospect somewhat similar in spirit to the arguments in \cite{bo-das4}; however the situations are completely different. One can ask  converse statements -- akin to that in \thmref{mainthm} -- for the various Fourier-Jacobi (FJ) expansions of $F$ as well. If $F$ is cuspidal of course all the FJ-coefficients $\phi_{F, \mc M}$ (see \eqref{fj-exp}) are cuspidal for all $\mc M>0$ of size $r$ with $1 \le r \le n-1$. The question therefore is the converse to this. This theme seems quite interesting, and was considered in \cite{bo-das1} in level one. Here we want to consider higher levels, and propose the following.

\begin{conj} \label{fj-conj}
    Let $1 \le r \le n-1$ and $F \in M_k^n(\Gamma)$. Suppose that for any fixed $r$ as above, and for all but finitely many indices $\mc M \in \glrz  \backslash \Lambda_r$, the
    FJ-coefficients $\phi_{F, \mc M}$ are cuspidal. Then $F$ is cuspidal.
\end{conj}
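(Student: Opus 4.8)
Fix $r$ and pass to the corresponding Fourier--Jacobi expansion: writing $Z=\smat{\tau}{z}{{}^{t}z}{\tau'}$ with $\tau\in\h_{n-r}$, $\tau'\in\h_{r}$, one has $F=\sum_{\mc M\ge0}\phi_{F,\mc M}(\tau,z)\,e(\tr(\mc M\tau'))$, and for $T=\smat{T_{1}}{R/2}{{}^{t}R/2}{\mc M}$ the coefficient $a_{F}(T)$ is a Fourier coefficient of $\phi_{F,\mc M}$. Hence $\phi_{F,\mc M}$ being a Jacobi cusp form forces $a_{F}(T)=0$ whenever $T\ge0$ is not positive definite and its lower right $r\times r$ block lies in the $\glrz$-class of $\mc M$. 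With $S\subset\glrz\backslash\Lambda_{r}$ the finite set of indices at which cuspidality is not assumed, the hypothesis gives $a_{F}(T)=0$ for all singular $T\ge0$ whose lower right block is not $\glrz$-equivalent into $S$. The first step of the plan is to remove almost all of $S$ using automorphy: since $\Gamma$ is a congruence subgroup there is a finite-index $\Delta\subset\GL_{n}(\mf Z)$ with $a_{F}({}^{t}UTU)=\pm a_{F}(T)$ for $U\in\Delta$; taking $U=\smat{I}{NB}{0}{D}\in\Delta$ ($N$ the level, $D$ ranging over $\mathrm{SL}_{r}(\mf Z)\cap\Gamma(N)$ and $B$ over all integer matrices) one checks that the lower right block of ${}^{t}UTU$ runs over $\{{}^{t}D\mc M D+\tfrac{N}{2}({}^{t}BRD+{}^{t}D\,{}^{t}RB)+N^{2}\,{}^{t}BT_{1}B\}$, while the rank of $T$ is unchanged. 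When $T_{1}\ne0$ this set has unbounded determinant, hence meets infinitely many $\glrz$-classes and so escapes the finite set $S$, giving $a_{F}(T)=0$; and $T\ge0$ with $T_{1}=0$ forces $R=0$. Thus after this step the only singular Fourier coefficients of $F$ not yet known to vanish are the finitely many $a_{F}\smat{0}{0}{0}{\mc M_{0}}$ with $\mc M_{0}\in S$, together with $a_{F}(0)$ --- and $a_{F}(0)=0$ already, because otherwise $\phi_{F,0}=\Phi^{r}F\in M_{k}^{n-r}(\Gamma')$ would be a nonzero constant, impossible for $k>0$ since $n-r\ge1$.

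It remains to kill the coefficients $a_{F}\smat{0}{0}{0}{\mc M_{0}}$, $\mc M_{0}\in S$. I would do this through the Klingen decomposition $F=F^{\mathrm{cusp}}+\sum_{s=0}^{n-1}[g_{s}]^{n}_{s}$: since $\phi_{F^{\mathrm{cusp}},\mc M}$ is a Jacobi cusp form for every $\mc M$, the non-cuspidal part of $\phi_{F,\mc M}$ equals that of $\sum_{s}\phi_{[g_{s}]^{n}_{s},\mc M}$, so \conjref{fj-conj} becomes: if this non-cuspidal part vanishes for all but finitely many $\mc M$, then every $g_{s}=0$. A nonzero $[g_{s}]^{n}_{s}$ satisfies $\Phi^{n-s}[g_{s}]^{n}_{s}=g_{s}\ne0$ and has Fourier support meeting every rank stratum between $s$ and $n$; one expects that it therefore contributes a nonzero non-cuspidal term to $\phi_{[g_{s}]^{n}_{s},\mc M}$ (via the stratum $\max(r,s)\le\mathrm{rk}\,T<n$, non-empty as $r,s\le n-1$) for all but finitely many $\mc M$. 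Granting this, together with a linear-independence statement ensuring the non-cuspidal contributions of distinct Klingen-Eisenstein series do not cancel, the hypothesis forces each $g_{s}=0$, i.e.\ $F$ is a cusp form. An alternative route, after reducing to a Hecke eigenform, is to propagate cuspidality along a tower $\phi_{F,\mc M_{0}},\phi_{F,p\mc M_{0}},\phi_{F,p^{2}\mc M_{0}},\dots$: the tail is cuspidal since $p^{j}\mc M_{0}\notin S$ for $j$ large, and Hecke relations among Fourier--Jacobi coefficients should then push cuspidality back down to $\phi_{F,\mc M_{0}}$.

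The main obstacle is precisely this last step: extracting control of the Klingen-Eisenstein part of $F$ from information at cofinitely many Fourier--Jacobi indices. This is exactly the type of input that, as recalled in the introduction, is not available in general --- the Fourier expansions of higher-degree Klingen-Eisenstein series, their behaviour under the various $\Phi$-operators, the required linear independence of their non-cuspidal contributions, and the needed injectivity of the relevant Jacobi--Hecke operators are all known only in restricted settings. Accordingly one should expect an unconditional answer only where this structure is accessible: small degree, the Siegel-Eisenstein case $s=0$, level one or square-free level (where the solution of the basis problem supplies theta series), or a range of weights where one can instead deduce a bound $a_{F}(T)\ll\det(T)^{c}$ with $c<k-\tfrac{n+1}{2}$ and invoke \thmref{mainthm}. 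A proof in full generality presumably needs a genuinely new handle on the Eisenstein part, rather than on its Fourier coefficients.
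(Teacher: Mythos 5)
Your write-up correctly treats \conjref{fj-conj} as open --- the paper itself only states it as a conjecture and proves a single case in \thmref{fjcusp} (scalar Fourier--Jacobi indices, $\Gamma=\Gamma_0^n(N)$, $k\ge\frac{n-1}{2}$) --- but the route you sketch misses the two ingredients that make that case go through, and the part you do carry out has a flaw. First, everything in your opening paragraph controls the Fourier expansion of $F$ \emph{at the cusp $\infty$ only}, which can never suffice: cuspidality requires $\Phi(F|\gamma)=0$ for every class $\gamma\in\Gamma\backslash\mrm{Sp}_n(\Q)/P_{n,n-1}(\Q)$. To reach the other cusps you fall back on the Klingen decomposition, which is exactly the approach the introduction flags as infeasible (unknown Fourier expansions of Klingen--Eisenstein series, no linear-independence statement for their non-cuspidal FJ contributions), and indeed that is where your argument stalls. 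The paper's substitute is \lemref{pnn-1}: for $\Gamma_0^n(N)$ the double cosets $\Gamma_0^n(N)\backslash\mrm{Sp}_n(\Q)/P_{n,n-1}(\Q)$ admit representatives inside $P_{n,n-1}(\Q)R$, and an element of $P_{n,n-1}(\Q)$ acts on the type-$(n-1,1)$ FJ expansion through some $g_1\in\mrm{Sp}_{n-1}(\Q)$ acting on the individual Jacobi forms $\phi_{F,m}$, so that cuspidality of the $\phi_{F,m}$ at \emph{their} cusps --- which is what the hypothesis actually supplies --- transfers directly to positive-definite support of $F|\gamma$ at every cusp of $F$. No Eisenstein decomposition, no linear independence, no Hecke tower is needed.

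Second, the finite exceptional set $\mc S$ is removed in the paper not by $\GL_n(\mf Z)$-conjugation but by the opposite operator $\widetilde\Phi$: the cuspidal coefficients die under $\widetilde\Phi$, so $\widetilde\Phi F$ is a degree-$(n-1)$ form whose FJ expansion is supported on the finitely many indices in $\mc S$, and such a form vanishes for $k\ge\frac{n-1}{2}$ by the singularity argument of \cite[Lemma~5.6]{bo-das1}; combined with the Weissauer argument giving $\phi_{F,0}=0$, this upgrades ``all but finitely many'' to ``all''. Your conjugation by $\smat{I}{NB}{0}{D}$ is both weaker and, as stated, incorrect in one step: you assert that $T_1\ne0$ makes the determinant of the new lower-right block unbounded, but when $\mrm{rank}(T_1)<r$ the block ${}^tD\mc MD+\cdots+N^2\,{}^tBT_1B$ can stay singular for every $B$ (e.g.\ $n=3$, $r=2$, $T$ of rank one with $\mc M=0$), so the determinant is not the invariant that forces escape from a finite set of $\glrz$-classes; and even where it works, the manoeuvre leaves all coefficients $a_F\smat{0}{0}{0}{\mc M_0}$ untouched, which is precisely where you get stuck. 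If you want to push further, the profitable direction is to combine your observations with the $\widetilde\Phi$-plus-double-coset strategy of \thmref{fjcusp}; the case of higher-rank indices $r$ of the conjecture remains open even in the paper.
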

We prove this conjecture for $\Gamma=\Gamma^n_0(N)$ when $r=n-1$ by utilizing the special double coset representatives in $\Gamma^n_0(N) \backslash \mrm{Sp}_n(\Q)/ P_{n,n-1}(\Q)$, where $P_{n,n-1}(\Q)$ is a Klingen-parabolic, which is relevant in checking the cuspidality property via the Siegel $\Phi$ operator, see \eqref{pnr}. Finally let us mention that \conjref{fj-conj} may have some bearing with the questions on `formal Fourier-Jacobi expansions' of Siegel modular forms, see e.g. \cite[Proof of Theorem~9.2]{ibu-formal}.

In subsection~\ref{poles}, we discuss how the poles of the Rankin-Selberg $L$-series $\mc R(F,s)$ of $F \in \mkngam$ control its cuspidality. We show that if $k>n+1$, then $F$ is cuspidal if and only if $\mc R(F,s)$ does not have certain poles, cf. \propref{propole}. 

We end the introduction by remarking that the method seemingly can not handle vector-valued Siegel modular forms because there is no simple Fourier-Jacobi expansion available in this setting. This is at least known to the experts working in this area. One might try other approaches, see subsection~\ref{vvsmf} for a discussion on this.  We also discuss a refinement of \ref{main-conj} on certain `thin-sets', see subsection~\ref{thin}. Finally let us mention that in subsection~\ref{ext-fj} we propose a question which asks to recover cuspidality from a suitable growth of the extended Petersson norms of the FJ coefficients.

\subsection*{Acknowledgements}
{\small
We thank Prof. T. Ibukiyama for his comments and for drawing the author's attention to the works \cite{Ibu-SK} and \cite{ibu-formal}.
We thank the referee for a very meticulous reading of the paper and for many thoughtful suggestions which improved the paper.
The author thanks IISc. Bangalore, UGC Centre for Advanced Studies, DST India for financial support.
}

\section{Notation and setting} \label{prelim}
We will use standard notation throughout the paper. We use $\z, \mf N, \Q, \mf R$, and $\complex$ to denote the integers, positive integers, 
rationals, reals, and complex numbers, respectively. We put $A[B]:=B^tAB$ for appropriate matrices $A,B$.


For basic facts about Siegel modular forms we refer to \cite{andrianov2}, \cite{freitag} or 
\cite{klingen1}. The symplectic group $\spnr$ defined as
\[ \spnr = \{ g= \smat{A}{B}{C}{D} \in M({2n},\R) \mid M^t \smat{0_n}{-1_n}{1_n}{0_n}M= \smat{0_n}{-1_n}{1_n}{0_n} \} ,\]
acts on Siegel's half-space $\mf H_n$ defined as 
 \[  { \mf H_n := \{ Z= Z^t \in M(n, \mf C) \mid (Z - \overline{Z})/2i \text{ is positive definite} \}, } \] 
in the usual way by $g \langle Z \rangle=(AZ+B)(CZ+D)^{-1}$ and
on functions $F:{\mf H}_n\longrightarrow {\mf C}$ by the {weight $k \in \z$, $k \ge 0$} ``stroke'' operator:
\[ (F\mid_kg)(Z)= \det(CZ+D)^{-k} F(g \langle Z \rangle) \qquad  (g=\left(
\begin{smallmatrix}
A & B\\ C & D\end{smallmatrix}\right)\in \mrm{Sp}(n, \mf R), k \in \mf Z_{\geq0} ). \] 
When $k \in \frac{1}{2} \z$ and $k \ge 0$, one has to define the stroke operator and the setting somewhat differently (via double covering of $\spnr$), we refer the reader to \cite[(4.37)~Chapter~1]{andrianov2}.

Throughout the paper we put $\Gamma_n=\spn=\spnr \cap M(2n, \z)$.
For $g \in \Gamma_n$, we also write $g= \psmb a_g & b_g \\ c_g & d_g \psme$. 
We write $\displaystyle J(g,Z) := \det(c_g Z+d_g)$ as the automorphy factor. We also put $\h:= \mf H_1$.

A holomorphic function $F$ on
${\mf H}_n$ is called a modular form for $\Gamma$ of weight $k \in \z$, $k \ge 0$, if it satisfies the transformation law
\[ F\mid_k\gamma= F \qquad (\gamma=\left(\begin{smallmatrix}
A & B \\ C & D\end{smallmatrix}\right)\in \Gamma), \]
with the additional condition of being holomorphic at the cusps when $n=1$.
We denote the space of all such functions by $M^n_k(\Gamma)$. When $k$ is a half-integer, similar definitions apply, and a detailed account can be found in \cite[Chapter~2, \S~2]{andrianov2}.

Every $F \in \mkngam$ can be expressed by a Fourier series of the form
\begin{equation} \label{ffe}
    F(Z) = \sumn_{T \in \Lambda_n } a_F(T) e(TZ/M);
\end{equation}
for some integer $M \ge 1$. Here $\Lambda_n$ denotes the set of half-integral, positive semi-definite matrices; and $\Lambda_n^+$ its subset consisting of positive-definite matrices. Now $F$ is a cusp form if and only if \eqref{ffe} is supported on $\Lambda_n^+$ at all the cusps of $\Gamma$. We denote the space of cusp forms on $\Gamma$ by $\skgam$.
We put $\Gamma_n=\spn$, and put $M^n_k=M^n_k(\Gamma_n)$, omit the superscript $n$ if $n=1$, etc.

Let $N$ be the level of $\Gamma$. For any $0 \le r \le n-1$, $F \in \mkngam$ has a Fourier-Jacobi expansion for $F$ (of type $(n-r,r)$), and $\phi_{F,\mc M}$ being the Fourier-Jacobi (FJ) coefficients of $F$:
\begin{align} \label{fj-exp}
    F(Z) = \sumn_{\mc M \in \Lambda_r} \phi_{F,\mc M}(\tau,z) e(\mc M \tau'/N)
\end{align}
where $Z= \psmb \tau & z \\ z^t & \tau' \psme$, with $\tau \in \mf H_{n-r}, \tau' \in \mf H_r$; $\mf H_n$ being the Siegel's upper half-space of degree $n$. It is well-known that $\phi_{F,\mc M} \in J_{k, \mc M}$, the space of Jacobi forms of weight $k$ and index $\mc M$, see e.g. \cite{ziegler} for more details.

It is also known that $F$ as above is cupidal if and only if $F$ is in the kernel of the Siegel-$\phi$ operator at all the cusps -- which again is equivalent to checking that $\Phi(F|\gamma)=0$ for all $\Gamma \subseteq \Gamma \backslash \Gamma_n / P_{n,n-1}(\Q)$. We recall that for any $1 \le r \le n-1$ and any  commutative subring $R$ of $\R$; the following parabolic subgroups of $\mrm{Sp}_n(R)$ are defined by: 
  \begin{align} \label{pnr}
   P_{n,r}(R):= \{ \begin{pmatrix}
    a_{11} & 0 & b_{11} & b_{12} \\
    a_{21} & a_{22} & b_{21} & b_{22}\\
    c_{11} & 0 & d_{11} & d_{12}\\
    0 & 0 & 0 & d_{22}
    \end{pmatrix} \in \mrm{Sp}_n(R),  \q *_{11} \text{ is } r \times r, \, \, *_{22} \text{ is } (n-r) \times (n-r) \} .
\end{align}
Moreover, $\Phi (F)(Z_1) = \lim_{t \to \infty} F\psmb Z_1 & 0 \\ 0 & it \psme$.

We further let (with $A^t$ denoting the transpose of $A$ and $A^{-t}:=(A^{-1})^t$ for invertible $A$)
\begin{equation}
    \mathscr U(\Gamma)=\{  \psmb U & 0 \\ 0 & U^{-t} \psme \in \Gamma \mid U \in \glnz \}
\end{equation}
be the subgroup of the embedded copy of $\glnz$ inside $\Gamma$.

\subsubsection{Vector-tuple modular forms} \label{vvmf-sec}
An $n$-tuple $\mf f:= (f_1,\ldots,f_d)^t$ of holomorphic functions on $\mf H$ is called a vector-tuple modular form (v.v.m.f., to be distinguished from the usual vector-valued modular forms, see subsection~\ref{vvsmf}) of weight $k \in \tfrac{1}{2}\mf Z$ with respect to a representation $\rho \colon \widetilde{\Gamma} \to GL(d,\mf C)$ on a congruence subgroup $\widetilde{\Gamma}$ of the metaplectic double cover $\widetilde{\spn}$ of $\spn$, if
\[ \mf f \mid_k \g(Z)= \rho(\g) \mf f(Z), \q (Z \in \h_n, \g \in \widetilde{\Gamma}), \]
and if $n=1$, we impose the condition that $\mf f$ remains bounded as $\Im(Z) \to \infty$. The action $\mid_k$ operation is defined component-wise on $\mf f$. Let us denote the space of such functions by $\widetilde{M}_n(k,\rho, \widetilde{\Gamma})$. Such an $\mf f$ has a Fourier expansion of the form
\begin{align} \label{fe-vvmf}
    \mf f (Z) = \sumn_{T \in \Lambda_n} \overset{\rightarrow}{a}_{\mf f}(T) e(TZ/M),
\end{align}
where $\overset{\rightarrow}{a}_{\mf f}(n) \in \C^d$.
Further, the spaces of cusp forms $\widetilde{S}_n(k,\rho, \widetilde{\Gamma})$ consists of those $\mf f \in \widetilde{M}_n(k,\rho, \widetilde{\Gamma})$ whose Fourier coefficients (as in \eqref{fe-vvmf}) at all cusps are supported on positive-definite matrices.

\section{The case of half-integral weights when \texorpdfstring{$n=1$}{} } \label{appn}

\textsl{We let $k \in \tfrac{1}{2} \mf Z$ unless stated otherwise}. 

In the Proposition below we prove our main result of the paper for $n=1$, for all possible half-integral weights and for all congruence subgroups. We use Miyake's characterization \cite[Theorem 2.1.4]{miyake}  of cusp forms given below; which is however stated in loc. cit. only for integral weights.

\begin{thm} \label{miyake}
    Suppose $\mk f \in \mkgam$ be such that $\mk f (z) = O(y^{-\nu})$ as $y=\im(z) \to 0^+$ for some $0<\nu<k$ holds uniformly in $x=\re(z)$. Then $f$ is a cusp form.
\end{thm}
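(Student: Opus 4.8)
The plan is to carry out the argument of Miyake \cite[Theorem~2.1.4]{miyake}, reorganised so that only $|\mk f|$ and the moduli of automorphy factors intervene; this is what makes the half-integral weight and the general congruence subgroup harmless. Recall the reduction. Since $\mk f\in\mkgam$ is holomorphic at the cusps, at every cusp $\mathfrak a$ of $\Gamma$ one has $(\mk f|_k\sigma_{\mathfrak a})(w)=\sum_{m\ge 0}a_{\mathfrak a}(m)\,e((m+\kappa_{\mathfrak a})w/h_{\mathfrak a})$, where $\sigma_{\mathfrak a}\in\sltwo$ sends $\infty$ to $\mathfrak a$, $h_{\mathfrak a}>0$ is the width and $\kappa_{\mathfrak a}\in[0,1)$ the cusp parameter (for half-integral $k$, $\mk f|_k\sigma_{\mathfrak a}$ is defined via a lift of $\sigma_{\mathfrak a}$ to the metaplectic cover, and $|\mk f|_k\sigma_{\mathfrak a}|$ does not depend on the lift); and $\mk f\in S_k(\Gamma)$ iff $a_{\mathfrak a}(0)=0$ at every cusp with $\kappa_{\mathfrak a}=0$. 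Fix such a cusp. Put $\phi(z):=\im(z)^{k/2}|\mk f(z)|$. The weight-$k$ automorphy factor of $\gamma=\smat abcd$ has modulus $|cz+d|^{k}=(\im(z)/\im(\gamma z))^{k/2}$ --- equally so for $k\in\tfrac12+\mbb Z$, the multiplier system having absolute value $1$ --- so $\phi$ is $\Gamma$-invariant, and by the same computation $\phi(\sigma w)=\im(w)^{k/2}|(\mk f|_k\sigma)(w)|$ for every $\sigma\in\sltwo$. This invariance is the only point where ``half-integral'' needs care.

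The crux is a transport step: I want to evaluate $\phi$ along a sequence tending to the real axis --- so that the hypothesis applies --- which nonetheless still ``sees'' the constant term $a_{\mathfrak a}(0)$. Setting $w=iT$ in $\phi(\sigma_{\mathfrak a}w)=\im(w)^{k/2}|(\mk f|_k\sigma_{\mathfrak a})(w)|$ and using $(\mk f|_k\sigma_{\mathfrak a})(iT)\to a_{\mathfrak a}(0)$ as $T\to\infty$ gives $\phi(\sigma_{\mathfrak a}(iT))=T^{k/2}(|a_{\mathfrak a}(0)|+o(1))$. Now choose $\gamma\in\Gamma$ so that $\gamma\sigma_{\mathfrak a}=\smat abcd$ has $c\neq 0$: take $\gamma=1$ if $\mathfrak a\neq\infty$ (then already $\sigma_{\mathfrak a}$ has nonzero lower-left entry), and for $\mathfrak a=\infty$ take any $\gamma\in\Gamma$ with nonzero lower-left entry, which exists since $\Gamma$ has finite index in $\sltwo$, e.g. $\gamma=\smat 1 0 N 1$ with $N$ a multiple of the level. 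Set $z_T:=\gamma\sigma_{\mathfrak a}(iT)$; then $z_T$ is $\Gamma$-equivalent to $\sigma_{\mathfrak a}(iT)$, so $\phi(z_T)=T^{k/2}(|a_{\mathfrak a}(0)|+o(1))$, while $\im(z_T)=T/(c^{2}T^{2}+d^{2})$, hence $\im(z_T)\to 0^{+}$ and $\im(z_T)\asymp T^{-1}$ as $T\to\infty$.

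Finally, once $T$ is large enough that $\im(z_T)$ lies in the range where the hypothesis $\mk f(z)=O(\im(z)^{-\nu})$ applies, $\phi(z_T)=\im(z_T)^{k/2}|\mk f(z_T)|\ll\im(z_T)^{k/2-\nu}\ll T^{\nu-k/2}$; comparing with $\phi(z_T)=T^{k/2}(|a_{\mathfrak a}(0)|+o(1))$ yields $|a_{\mathfrak a}(0)|+o(1)\ll T^{\nu-k}$, and letting $T\to\infty$ forces $a_{\mathfrak a}(0)=0$ because $\nu<k$. Cusps with $\kappa_{\mathfrak a}>0$ need no argument, so $\mk f\in S_k(\Gamma)$. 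I do not expect a genuine obstacle here; what needs care is (a) the metaplectic bookkeeping --- checking that it really leaves $\phi$ invariant and really puts the cusp expansions in the form above, in particular the irregular cusps $\kappa_{\mathfrak a}>0$, which occur for $k\in\tfrac12+\mbb Z$ and are absent from Miyake's integral-weight statement --- and (b) the auxiliary $\gamma$, which is what drags the cusp $\infty$ into the region $\im(z)\to0^{+}$ so that the single hypothesis on $\mk f$ covers every cusp.
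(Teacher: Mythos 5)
The paper does not actually prove this statement: it is imported from Miyake \cite[Theorem~2.1.4]{miyake}, with only the remark that the integral-weight argument ``is valid for any real weight $k>0$ with suitable modification of the setting.'' Your proposal supplies a correct, self-contained proof of exactly that assertion, and it is essentially Miyake's argument: where Miyake estimates $a_{\mk a}(0)$ via the Fourier-coefficient integral of $\mk f|_k\sigma_{\mk a}$ over a horizontal segment at height $T$, you evaluate the invariant function $\phi(z)=\im(z)^{k/2}|\mk f(z)|$ at the single points $z_T=\gamma\sigma_{\mk a}(iT)$ and let $T\to\infty$; the two are interchangeable, and your version makes transparent why only $|\mk f|$ and $|cz+d|^{k}$ ever enter, so that half-integral weight, unimodular multipliers, and general congruence subgroups cost nothing. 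The two points you flag are indeed the only delicate ones and you handle both correctly: invariance of $\phi$ survives the metaplectic lift because the multiplier has modulus one, and irregular cusps ($\kappa_{\mk a}>0$) impose no cuspidality condition. The one hypothesis worth making explicit is that the bound $\mk f(z)=O(y^{-\nu})$ as $y\to 0^{+}$ is uniform in $x$ (as in Miyake's statement); your argument needs it only along the sequence $z_T$, whose real parts converge to the rational point $a/c$, so even a locally uniform version suffices. No gap.
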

We will use this when $k$ is integral. Alongside, we will also offer an independent way to tackle half-integral weights. The main result of this section is the following. It is a purely function-theoretic result, and has nothing to do with arithmetic. The result is certainly expected (cf. \cite[\S 5,(i)]{bo-ko}), but we could not find this result in the literature.

\begin{prop} \label{half-hecke}
Let $ k \geq 1/2$ be half-integral and $\mk f \in M_k(\Gamma)$ be non-zero. If $|a_{\mk f}(n)| \ll_{\mk f} n^{c}$ for all $n \ge 1$, and for any fixed $ c < k-1$, then $\mk f$ must be a cusp form.
\end{prop}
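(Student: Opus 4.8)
The plan is to handle the integral and half-integral cases somewhat differently, though both will be driven by the same underlying idea: convert the Fourier-coefficient bound into a growth estimate for $\mk f(z)$ as $y \to 0^+$, and then invoke a Miyake-type characterization of cusp forms. First I would dispose of the integral-weight case. Write $\mk f(z) = \sum_{n \ge 0} a_{\mk f}(n) e(nz/M)$ at the cusp $\infty$; assuming $\mk f$ is not a cusp form, we need to show the coefficient bound $|a_{\mk f}(n)| \ll n^c$ with $c < \kappa - 1$ fails. If $\mk f$ is not cuspidal then at some cusp the constant term is non-zero, and conjugating by a scaling matrix we may assume it is the cusp $\infty$, so $a_{\mk f}(0) \ne 0$. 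Then for $z = iy$ with $y \to 0^+$, the standard estimate $|\mk f(iy)| \le \sum_n |a_{\mk f}(n)| e^{-2\pi n y/M} \ll \sum_n n^c e^{-2\pi n y/M} \ll y^{-(c+1)}$ shows $\mk f(z) = O(y^{-(c+1)})$ with $c+1 < \kappa$; by \thmref{miyake} (valid for integral weight) this forces $\mk f$ to be a cusp form, a contradiction. Conversely, the hypothesis $c < \kappa - 1$ means $\nu := c+1$ satisfies $0 < \nu < \kappa$, so the argument applies directly: the bound on coefficients yields the growth bound, Miyake gives cuspidality.

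For the half-integral case Miyake's theorem as stated is not available, so I would give the "independent way to tackle half-integral weights" alluded to in the text. The natural route is again via the boundary behaviour. Suppose $\mk f \in M_\kappa(\Gamma)$ is non-zero and satisfies $|a_{\mk f}(n)| \ll n^c$, $c < \kappa - 1$, but is not a cusp form; as above, after conjugating by a scaling we may assume the constant term at $\infty$ is nonzero. The coefficient bound again gives $\mk f(iy) = O(y^{-(c+1)})$ as $y \to 0^+$ with exponent strictly less than $\kappa$. Now I would reprove Miyake's criterion in this setting by a direct argument: consider the function $\varphi(z) = y^{\kappa/2}|\mk f(z)|$, which is $\widetilde{\Gamma}$-invariant (the automorphy factors have modulus one), hence descends to the modular curve $\widetilde{\Gamma}\backslash \mbb H$. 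The growth hypothesis $\varphi(iy) = O(y^{\kappa/2 - (c+1)})$ with $\kappa/2 - (c+1) > \kappa/2 - \kappa = -\kappa/2$... — more carefully, the point is that $\kappa - (c+1) > 0$, so $\varphi(z) \to 0$ as $z$ approaches the cusp along the imaginary axis; combined with invariance and continuity on the compact surface minus cusps, $\varphi$ is bounded near every cusp, and since $\mk f$ is holomorphic the only obstruction to $\varphi \to 0$ at a cusp is a nonzero constant term there. So the growth bound forces the constant term at every cusp to vanish, i.e. $\mk f$ is cuspidal — contradiction. This establishes the Proposition.

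The main obstacle I anticipate is the half-integral-weight endpoint bookkeeping: ensuring the invariance of $\varphi(z) = y^{\kappa/2}|\mk f(z)|$ under the metaplectic congruence subgroup is genuinely clean (the theta-multiplier has absolute value one, so this is fine), but one must be careful that conjugating by a scaling matrix $z \mapsto z/\lambda$ to move a nonzero constant term to the cusp $\infty$ stays within the metaplectic framework and does not disturb half-integrality — this is where the hypothesis that $\Gamma$ is an \emph{arbitrary} congruence subgroup (rather than $\Gamma_0(4)$-type) matters, and one needs that the width of each cusp is finite so the local Fourier expansion $\mk f(z) = \sum a(n) e(nz/M)$ exists with $M \ge 1$. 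A secondary point is making the implied constants uniform enough that "$O(y^{-(c+1)})$ with $c+1 < \kappa$ at the cusp $\infty$" genuinely implies the same kind of decay at every cusp after transformation; this follows since there are finitely many cusps and each transformed form again has polynomially bounded coefficients with the same exponent $c$, so no new idea is required — only care.
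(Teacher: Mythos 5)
Your integral-weight argument coincides with the paper's: the coefficient bound gives $\mk f(z)\ll y^{-(c+1)}$ as $y\to 0^+$ uniformly in $x$, and \thmref{miyake} finishes. (Two small points there: the opening reduction ``conjugate so that the nonzero constant term sits at $\infty$'' is not legitimate, since the hypothesis $|a_{\mk f}(n)|\ll n^{c}$ is tied to the cusp $\infty$ and is not preserved by such a conjugation --- but your second, direct formulation avoids this entirely; and when $c\le -1$ you should replace $\nu=c+1$ by a small $\nu=\epsilon>0$, as the paper does, since Miyake's criterion needs $\nu>0$.) For half-integral weight you genuinely diverge from the paper: you propose to re-prove Miyake's criterion for real weight via the invariant function $\varphi=y^{\kappa/2}|\mk f(z)|$, whereas the paper deliberately avoids this and instead multiplies by $\theta$, applies the integral-weight case to $\mk g(\tau)=\mk f(4N\tau)\theta(\tau)\in M_{k+1}(\Gamma_1(16N^2))$ (the exponent only degrades from $c$ to $c+1/2$, still admissible for weight $k+1$), and then recovers cuspidality of $\mk f$ from that of $\mk g$ by comparing orders of vanishing at cusps using the explicit behaviour of $\theta$ at $\infty,0,1/2$. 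Your route is legitimate --- the paper itself remarks that Miyake's theorem is valid for any real weight with suitable modification --- and is arguably more direct, at the price of actually carrying out that modification; the paper's route buys a proof that only ever cites Miyake as stated, at the price of the $\theta$-cusp bookkeeping.

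Two steps in your half-integral sketch need repair before it is a proof. First, $\varphi(iy)=O(y^{\kappa/2-(c+1)})$ does \emph{not} give $\varphi\to 0$ as $y\to 0^+$, since $\kappa/2-(c+1)$ may well be negative; what you actually have, and what suffices, is $\varphi(iy)=o(y^{-\kappa/2})$, which contradicts the asymptotic $\varphi(iy)\sim |a_{0}(0)|\,y^{-\kappa/2}$ forced by a nonzero constant term $a_0(0)$ at the cusp $0$ (and likewise at every real cusp, using that the bound is uniform in $x$). The appeal to ``invariance and continuity on the compact surface minus cusps'' does not substitute for this computation. Second, the limit $y\to 0^+$ never approaches the cusp $\infty$ or its $\Gamma$-equivalents, so your mechanism as stated does not show $a_{\mk f}(0)=0$; one must additionally choose $\gamma=\smat{a}{b}{c}{d}\in\widetilde{\Gamma}$ with $c\neq 0$ and use automorphy to convert the $y\to 0$ bound into $|\mk f(iy)|\ll y^{\nu-\kappa}\to 0$ as $y\to\infty$. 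Both repairs are standard (they are exactly the content of Miyake's own proof), so this is a correct alternative route with fixable imprecisions rather than a failed one.
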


\begin{proof}
Our proof is based on the argument in \cite{miyake}. First of all  from the Fourier expansion \eqref{ffe} of $\mk f$, and the bound in the lemma, we find, bounding absolutely, that (with any $\beta >0$)
\begin{align} \label{fe1}
    \mk f(z) \ll_{\mk f,  \beta} |a_{\mk f}(0)| + \sumn_{n \ge 1} n^c \exp(- \frac{2 \pi}{L} \cdot ny)
    \ll |a_{\mk f}(0)| + \sumn_{n \ge 1} n^c \cdot (ny)^{-\beta},
\end{align}
where $L$ is the level of $\Gamma$ and the implied constants depends only on $\mk f$. We avoided using Lipschitz's formula here.

Let us now choose $\epsilon>0$ such that $c+\epsilon<k-1$.
We first assume that $c \ge -1$. Simply using that $e^x \gg x^{1+c+\epsilon}$ ($x>0$), \eqref{fe1} shows that, for all $0<y<1$, the bound $\displaystyle \mk f(z) \ll y^{-(1+c+\epsilon)}$.
Now if $c \le -1$, we simply use $n^c \le n^{-1}$ in \eqref{fe1} and get in this case, the bound $\displaystyle \mk f(z) \ll y^{-\epsilon}$. Summarizing,
\begin{align} \label{fe2}
    \mk f(z) &\ll y^{-(1+c+\epsilon)} & \text{ if } c \ge -1, \\
    \mk f(z) &\ll y^{-\epsilon} & \text{ if } c \le -1.
\end{align}
These bounds show that there exists $0<\nu<k$ such that (uniformly in $x$ and for all $0<y<1$)
\begin{align} \label{fe3}
    \mk f(z) &\ll y^{-\nu}.
\end{align}

We now invoke \thmref{miyake}.
Even though Miyake states the above result for $k$ \textit{integral}, one may  note that it is valid for any \textit{real} weight $k >0$ with suitable modification of the setting.
We however choose to invoke it only for integral weights.

For this, simply consider $\mk g(z)=\mk f(z)^4$. Then $\mk g \in M_{k}(\Gamma)$ and $k=4k \ge 2$, with $k$ integral. Then clearly we have $\mk g(z) \ll y^{- \eta}$, with $0<\eta<k$ as $y \to 0$. Therefore by Theorem~\ref{miyake}, $\mk  g= \mk f^4$ is a cusp form and hence so is $\mk f$, which follows immediately by writing the definition of $\mk f|\gamma$ for $\Gamma \subseteq \sltwo$ corresponding to the cusps. The lemma is thus proved.
\end{proof}

\begin{rmk}[The vector-tuple case]
We can also easily prove the following:
   Let $ k \geq 1/2$ and  $\mf f \in \widetilde{M}_1(k,\rho)$  be non-zero. If $\| \overset{\rightarrow}{a}_{\mk f}(n) \| \ll_{\mk f} n^{c}$ for all $n \ge 1$, and for any fixed $ c < k-1$, then $\mf f$ must be a cusp form.

Here, we denote by $\| \cdot \|$ the usual $L^2$ norm on $\mf C^d$.
The proof is verbatim the same as that of \propref{half-hecke}, if we note that
\[  \| \mf f(z) \| \le \sumn_{n \ge 0} \| \overset{\rightarrow}{a}_{\mk f}(n) \|  \exp(- \alpha \cdot ny) \ll_{\mk f, \alpha, \beta} \| \overset{\rightarrow}{a}_{\mf f}(0) \| + \sumn_{n \ge 1} n^c \exp(- \alpha \cdot ny),\]
for some constant $\alpha>0$, after which everything works as before, if we further note that $\mf f \cdot \theta = (f_1 \theta, f_2\theta, \ldots)$ is a v.v.m.f. for another representation on a possibly smaller congruence subgroup.
\end{rmk}

If $\rho$ factors through a congruence subgroup of $\widetilde{\spn}$ (the metaplectic double cover of $\spn$), then one can easily show that the respective FJ-coefficients are scalar valued Jacobi forms and the analogue of the above Remark holds in higher degrees (i.e. in the true vector-valued case), cf. the next Section and Section~\ref{vvsmf}. However the same is not clear for a general $\rho$, even though we expect a positive answer.

\begin{conj}
    The analogue of \thmref{mainthm} holds for the space $\widetilde{M}_n(k,\rho, \widetilde{\Gamma})$ for any given $\rho$.
\end{conj}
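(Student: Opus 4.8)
The plan is to re-run the proof of \thmref{mainthm}, replacing scalar Siegel modular forms by vector-tuple forms at every step. The base case $n=1$ is already in hand: it is exactly the vector-tuple version of \propref{half-hecke} stated in the Remark above, and that version holds for an arbitrary representation. The only place where the passage from the scalar to the vector-tuple setting needs genuinely new input is the Fourier-Jacobi / theta-decomposition step, which is precisely the point flagged in the paragraph preceding this conjecture. Before coming to it, note that when $\rho$ factors through a congruence subgroup of $\widetilde{\spn}$ -- the easy case mentioned there -- the statement is immediate: each component $f_i$ of $\mf f$ is then a scalar modular form of weight $k$ on that congruence subgroup (on the metaplectic cover when $k \notin \mf Z$), and its Fourier coefficients at $\infty$ are the components of $\overset{\rightarrow}{a}_{\mf f}(T)$, so they obey the same bound; hence \thmref{mainthm} applied to each $f_i$ forces $\mf f$ to be a cusp form.

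For an arbitrary $\rho$ we argue by induction on $n$, following the strategy of \thmref{mainthm}. Consider the Fourier-Jacobi expansion of $\mf f$ of type $(n-1,1)$, say $\mf f(Z) = \sumn_{m \ge 0} \bphi_{\mf f, m}(\tau, z)\, e(m\tau'/N)$ with $Z = \psmb \tau & z \\ z^t & \tau' \psme$, $\tau \in \h_{n-1}$, $\tau' \in \h_1$, $z \in \C^{n-1}$. One first verifies that each $\bphi_{\mf f, m}$ is a vector-tuple Jacobi form of weight $k$ and index $m$ for a representation determined by $\rho$, on (a congruence subgroup of) the Jacobi group attached to the metaplectic cover of $\mrm{Sp}_{n-1}(\mf Z)$, and then that it admits a theta decomposition $\bphi_{\mf f, m} = \sum_{\mu} \mf h_{m,\mu}\, \theta_{m,\mu}$ in which each $\mf h_{m,\mu}$ is a vector-tuple Siegel modular form of degree $n-1$ and weight $k - \tfrac{1}{2}$. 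This weight is half-integral when $k$ is integral, which is exactly why the half-integral theory and the metaplectic cover must be carried along from the start. The Fourier coefficients of $\mf h_{m,\mu}$ at $\infty$ are, up to constants depending on $m$, the coefficients $\overset{\rightarrow}{a}_{\mf f}$ of positive-definite $n\times n$ matrices whose determinant is $\asymp_m$ the index of the corresponding coefficient of $\mf h_{m,\mu}$; hence $\| \overset{\rightarrow}{a}_{\mf h_{m,\mu}}(T') \| \ll_m \det(T')^{c}$ with $c < k - \tfrac{n+1}{2} = (k - \tfrac{1}{2}) - \tfrac{(n-1)+1}{2}$, which is exactly the admissible range in degree $n-1$ and weight $k - \tfrac{1}{2}$. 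By the inductive hypothesis every $\mf h_{m,\mu}$, and hence every $\bphi_{\mf f, m}$ with $m \ge 1$, is cuspidal; from here one concludes, exactly as in the proof of \thmref{mainthm}, that $\mf f$ is a cusp form (reducing to the case of a Fourier expansion supported on positive-definite coefficients at $\infty$ and then applying the Miyake-type growth estimate of \cite{bo-ko}).

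The one genuine obstacle -- and the reason this remains a conjecture -- is the structural input used at the start of the previous paragraph. For a general $\rho$ it is not clear that the Fourier-Jacobi coefficients $\bphi_{\mf f, m}$ are again vector-tuple Jacobi forms admitting a theta decomposition into vector-tuple forms one degree lower. When $\rho$ factors through a congruence subgroup, $\rho$ is trivial on a congruence subgroup of the ambient Jacobi group, so after decomposing the finite-dimensional representation of the quotient the $\bphi_{\mf f, m}$ become ordinary scalar Jacobi forms on congruence subgroups, to which the classical theta decomposition applies. For a $\rho$ with infinite image this breaks down: $\rho$ acts non-trivially on the Heisenberg part of every congruence Jacobi subgroup, so the $z$-dependence of $\bphi_{\mf f, m}$ need no longer be governed by the classical Jacobi theta series, and one is faced with the problem -- of independent interest -- of developing a theory of theta decompositions, or at least of the expansion in $z$, for vector-tuple Jacobi forms attached to an arbitrary representation. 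Trying to bypass the theta decomposition and estimate $\bphi_{\mf f, m}(\tau, z)$ directly as $\det(\im \tau) \to 0$ does not obviously help: it reintroduces the same difficulty one degree lower, again with no positive-definite support of the Fourier expansion to exploit.
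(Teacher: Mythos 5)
The statement you are addressing is stated in the paper as a conjecture, and the paper offers no proof of it. The only thing the paper asserts in its vicinity is that when $\rho$ factors through a congruence subgroup of $\widetilde{\spn}$ the FJ-coefficients become scalar-valued Jacobi forms and the higher-degree analogue of the remark following \propref{half-hecke} goes through, while explicitly admitting that ``the same is not clear for a general $\rho$.'' Your proposal reproduces exactly this state of affairs rather than resolving it: the reduction in the factoring case is correct and coincides with the paper's observation, and your inductive outline for general $\rho$ stalls at precisely the point the paper flags. You yourself name the gap in your final paragraph, so this is not a proof of the conjecture but an (accurate) explanation of why it is a conjecture.

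To be concrete about the gap: the inductive step requires that each Fourier--Jacobi coefficient $\bphi_{\mf f, m}$ be a vector-tuple Jacobi form admitting a theta decomposition $\bphi_{\mf f, m} = \sum_{\mu} \mf h_{m,\mu}\, \theta_{m,\mu}$ with the $\mf h_{m,\mu}$ vector-tuple forms of degree $n-1$ and weight $k-\tfrac{1}{2}$, so that the growth hypothesis can be transported to them and the induction hypothesis invoked. For $\rho$ with infinite image no such decomposition is available: $\rho$ acts nontrivially on the Heisenberg part of every congruence Jacobi subgroup, so the $z$-dependence of $\bphi_{\mf f,m}$ is not governed by the classical theta series $\theta_{m,\mu}$, and there is no degree-$(n-1)$ object on which to place the inductive hypothesis. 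Everything downstream of that missing lemma in your outline --- the weight bookkeeping $k-\tfrac{n+1}{2} = (k-\tfrac{1}{2}) - \tfrac{(n-1)+1}{2}$, the positive-definite support argument, and the Miyake-type estimate --- is sound and identical in structure to the paper's proof of \thmref{mainthm}, but none of it can be set in motion until the theta-decomposition problem for vector-tuple Jacobi forms attached to an arbitrary $\rho$ is solved. That is the open problem, and your write-up should be presented as a reduction of the conjecture to that problem, not as a proof.
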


\section{The case of higher degrees}
We would now generalize the  results of the previous section to the higher degrees, i.e., when $n>1$. The main tool will be the Fourier-Jacobi expansion along with an inductive argument on the degree $n$. The crucial observation would be that the maximal exponent in the growth condition, viz. $\displaystyle k-\frac{n+1}{2}$, is consistent along each Fourier-Jacobi expansion of $F$.

We have to consider the following theta decomposition for each $\phi_{F, \mc M}$ given by:
\begin{align} \label{theta-decomp}
    \phi(\tau,z) = \sumn_{\mu \in  (2 \mc M)^{-1}\mf Z^{r,n-r} \big/ \, \mf Z^{r,n-r} } h_\mu(\tau) \theta_{\mc M,\mu}(\tau,z). 
\end{align}
Then it is known (cf. \cite[Corollary~2 to Theorem~3.3]{ziegler}) that $h_\mu \in M_{k - \frac{r}{2}}(\Gamma')$ for some congruence subgroup $\Gamma'$. Note that the theta series has weight $\frac{r}{2}$ in this case. 
Let $F \in \mkngam$ with the FJ expansion from \eqref{fj-exp}: 
$\displaystyle F(Z) = \sumn_{\mc M \ge 0} \phi_{F,\mc M}(\tau,z) e(\mc M \tau'/N)$.
Then the Fourier expansion of the theta components $h_\mu$ of $\phi_{F,\mc M} $ are given by (loc. cit.)
\begin{align} \label{fe-hmu}
    h_\mu(\tau) = e(- \frac{1}{4} \mc M^{-1}[\mu] \tau/N) \sum_{ l \in  \Lambda_{n-r}, \, l \ge \frac{1}{4} \mc M^{-1}[\mu]  } c(l, \mu) e(l \tau/N),
\end{align}
where 
\begin{align} \label{clmu}
    c(l, \mu) := a_F (\begin{pmatrix} l & \mu/2 \\ \mu^t/2 & \mc  M \end{pmatrix}).
\end{align}

\begin{lem} \label{cusp-theta-comp}
    Let $\mc M \in \Lambda_r^+$, $k \in \tfrac{1}{2} \mf Z$, and $\phi_{\mc M} \in J_{k,\mc M}(\Gamma)$. Then $\phi_{\mc M}$ is cuspidal if and only if all its theta components $h_\mu$ are cuspidal.
\end{lem}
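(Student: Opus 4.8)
The plan is to unwind both sides of the asserted equivalence through the explicit Fourier expansion of the theta decomposition \eqref{theta-decomp}, using the dictionary \eqref{fe-hmu}--\eqref{clmu} between the coefficients $c(l,\mu)$ of the $h_\mu$ and the Fourier coefficients $a_F$ of the underlying Siegel form (equivalently, of $\phi_{\mc M}$). Recall that $\phi_{\mc M} \in J_{k,\mc M}(\Gamma)$ is cuspidal precisely when its Fourier expansion is supported on indices $\psmb l & \mu/2 \\ \mu^t/2 & \mc M \psme$ that are positive definite, and similarly a theta component $h_\mu \in M_{k-r/2}(\Gamma')$ is a cusp form iff its Fourier expansion (at every cusp) is supported on positive-definite indices. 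So the first step is to record these two support conditions concretely.

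First I would treat the easy direction ($h_\mu$ all cuspidal $\Rightarrow$ $\phi_{\mc M}$ cuspidal): if some Fourier index $\smat{l}{\mu/2}{\mu^t/2}{\mc M}$ of $\phi_{\mc M}$ were singular (i.e. not positive definite, hence of rank $< n$), then since $\mc M > 0$ the only way to have a non-trivial kernel is that the Schur complement $l - \tfrac14 \mc M^{-1}[\mu^t]$ is singular; but by \eqref{fe-hmu} the coefficient $c(l,\mu)$ of $h_\mu$ sits exactly at the index $l - \tfrac14\mc M^{-1}[\mu^t]$ (after the exponential shift), which would then be a non-positive-definite index in the support of $h_\mu$ at the cusp $\infty$, contradicting cuspidality of $h_\mu$. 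For the converse ($\phi_{\mc M}$ cuspidal $\Rightarrow$ each $h_\mu$ cuspidal), the same Schur-complement computation shows the support condition at $\infty$; the point requiring a little care is that cuspidality of $h_\mu$ must be checked at \emph{all} cusps of $\Gamma'$, not just $\infty$. Here I would use that the theta decomposition is compatible with the slash action of $\mrm{SL}_2$ (the vector $(h_\mu)_\mu$ transforms under a Weil-type representation, as in \cite{ziegler}), so that applying $\gamma \in \mrm{SL}_2(\mf Z)$ permutes/mixes the $h_\mu$ among themselves while $\phi_{\mc M}|_{k,\mc M}\gamma$ is again a Jacobi form whose theta components are these transformed $h_\mu$; since $\phi_{\mc M}$ cuspidal implies $\phi_{\mc M}|\gamma$ cuspidal, the $\infty$-argument applied to $\phi_{\mc M}|\gamma$ yields the support condition for the $h_\mu$ at the cusp $\gamma^{-1}\infty$. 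One subtlety is that $\Gamma$ may have several cusps, so strictly one runs this over a set of coset representatives; since the number of cusps is finite this is harmless.

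The main obstacle I anticipate is purely bookkeeping: matching the congruence subgroup $\Gamma'$ (and its cusps) with the images of $\Gamma$ under the relevant embeddings and slash actions, and tracking the exponential prefactor $e(-\tfrac14 \mc M^{-1}[\mu]\tau/N)$ through the cusp-expansions so that "supported on positive-definite indices" on the $h_\mu$ side really does correspond to "Schur complement positive definite" on the $\phi_{\mc M}$ side. None of this is deep, but it is where the argument could become awkward; I would handle it by quoting the transformation formulas for $\theta_{\mc M,\mu}$ from \cite[Theorem~3.3]{ziegler} verbatim rather than re-deriving them, and by phrasing the support condition intrinsically (rank of the index matrix) so that it is manifestly invariant under the $\mrm{SL}_2$-action and one does not have to chase constants. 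The positive-definiteness of $\mc M$ is used throughout to guarantee that the Schur complement is the unique obstruction to positive-definiteness of the full index.
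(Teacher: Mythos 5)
Your overall strategy --- Schur-complement bookkeeping on the Fourier supports together with the compatibility of the theta decomposition \eqref{theta-decomp} with the slash action via the Weil representation --- is the same as the paper's. However, you have deployed the all-cusps argument in the wrong direction. In the implication you call ``easy'' ($h_\mu$ all cuspidal $\Rightarrow$ $\phi_{\mc M}$ cuspidal), you verify the positive-definite support of the Fourier expansion of $\phi_{\mc M}$ only at the cusp $\infty$. But cuspidality of a Jacobi form on a congruence subgroup $\Gamma$ means positive-definite support at \emph{every} cusp, and this is the direction of the lemma that is actually used in the proof of Theorem~\ref{mainthm} (the full cuspidality of $\phi_{F,\mc M}$ feeds into the $\tilde\Phi$-operator argument), so the gap is not cosmetic. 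The repair is exactly the mechanism you describe for the other direction: for $\gamma$ representing an arbitrary cusp, $\phi_{\mc M}|\gamma = \sum_\nu \tilde h_\nu\, \theta_{\mc M,\nu}$, where each $\tilde h_\nu$ is an invertible linear combination of the $h_\mu|\gamma$ by the Weil representation; since each $h_\mu$ is a cusp form, each $\tilde h_\nu$ has Fourier expansion supported on positive-definite indices, and since the Fourier coefficients of $\theta_{\mc M,\nu}$ are supported on positive semi-definite indices (with $\mc M>0$), the resulting expansion of $\phi_{\mc M}|\gamma$ is supported on positive-definite matrices. This is precisely the paper's proof of this direction.

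In the other direction ($\phi_{\mc M}$ cuspidal $\Rightarrow$ each $h_\mu$ cuspidal), where you do run the argument over all cusps, your reasoning is sound --- though you should make explicit that you are using the invertibility of the Weil-representation matrices to recover each individual $h_\mu|\gamma$ from the mixed theta components of $\phi_{\mc M}|\gamma$. The paper disposes of this direction in one line by observing that the $h_\mu$ inherit their Fourier expansions from $\phi_{\mc M}$ at all cusps, so your treatment here is, if anything, more detailed than needed; the effort should be transferred to the other implication.
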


\begin{proof}
If $\phi_{\mc M}$ is cuspidal, then the same is true for all $h_\mu$ by looking at their Fourier expansions cf. \eqref{fe-hmu}, \eqref{clmu} at all the cusps. The $h_\mu$ inherit their Fourier expansion from that of $\phi_{\mc M}$ at all the cusps.

Conversely, suppose that all the $h_\mu$ are cuspidal. We check that for each $\Gamma \subseteq \Gamma_n$, the Fourier expansion of $\phi_{\mc M}$ is supported on positive definite matrices. To see this, note that $\theta_{\mc M,\mu}|\gamma $ is again a linear combination of the same set of $\theta_{\mc M,\nu}$ (via the action of the Weil representation), and that the Fourier coefficients of $\theta_{\mc M,\mu}$ are supported on non-negative matrices. The proof thus follows from \eqref{theta-decomp}, with $\phi_{\mc M}$ replaced by $\phi_{\mc M}|\gamma$.
\end{proof}

\begin{lem} \label{lipschitz-lem}
    For $Y>0$, real numbers $\alpha, \beta, \epsilon >0$ one has the bound
    \begin{align}
        \sumn_{T \in \Lambda_n^+} \det(T)^\alpha \exp(- \beta (\tr T Y)) \ll_{\alpha, \beta, n, \epsilon} \det(Y)^{- (\alpha + \frac{n+1}{2} +\epsilon) }.
    \end{align}
\end{lem}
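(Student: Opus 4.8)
The plan is to reduce the sum over positive-definite half-integral matrices $T$ to an integral, which can then be evaluated by the classical Siegel integral formula for the Gamma function of a matrix argument. First I would observe that $\det(T)^\alpha \exp(-\beta \tr(TY))$, as a function of $T$ ranging over the real symmetric positive-definite cone $\mathrm{Sym}_n(\mathbf R)^+$, is (up to constants depending only on $\alpha,\beta,n$) monotone in a controlled way along each coordinate direction, so that the sum over the lattice $\Lambda_n^+$ is comparable to the integral
\[
\int_{\mathrm{Sym}_n(\mathbf R)^+} \det(T)^{\alpha} \exp\bigl(-\beta\,\tr(TY)\bigr)\, dT.
\]
The standard way to make this rigorous is to bound each lattice term by the integral of the same integrand over a unit cube (translated to sit near $T$), after first shifting $T \mapsto T + c\,\mathrm{Id}$ for a suitable fixed $c>0$ to absorb the fact that $\det$ is not monotone near the boundary of the cone; the shift changes the integrand only by a constant factor depending on $\alpha,\beta,n$ and on $Y$ through $\exp(-\beta c\,\tr Y) \le 1$.

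Next I would evaluate the integral by the substitution $T = Y^{-1/2} S Y^{-1/2}$, which has Jacobian $\det(Y)^{-(n+1)/2}$ (this is the well-known transformation rule for Lebesgue measure on $\mathrm{Sym}_n$), giving $\tr(TY) = \tr(S)$ and $\det(T) = \det(S)\det(Y)^{-1}$. Hence the integral equals
\[
\det(Y)^{-\alpha - \frac{n+1}{2}} \int_{\mathrm{Sym}_n(\mathbf R)^+} \det(S)^{\alpha} \exp\bigl(-\beta\,\tr S\bigr)\, dS,
\]
and the remaining integral is a finite constant depending only on $\alpha,\beta,n$ (it is essentially $\beta^{-n\alpha - n(n+1)/2}$ times Siegel's $\Gamma_n$-function, which converges precisely because $\alpha>0$ ensures integrability at the boundary and the exponential controls the behaviour at infinity). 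This yields the claimed bound $\ll_{\alpha,\beta,n} \det(Y)^{-(\alpha + \frac{n+1}{2})}$.

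The main obstacle — really the only nontrivial point — is the comparison between the discrete sum over $\Lambda_n^+$ and the integral, because $\Lambda_n^+$ is a shifted lattice living in the interior of the cone and the integrand is genuinely not monotone on all of $\mathrm{Sym}_n(\mathbf R)^+$ (the factor $\det(T)^\alpha$ with $\alpha>0$ vanishes on the boundary). I would handle this by the shift trick above: for $T \in \Lambda_n^+$ one has $T \ge c_0\,\mathrm{Id}$ for some absolute $c_0>0$ (the minimum of $\det$ over the finitely many $\mathrm{GL}_n(\mathbf Z)$-reduced forms of small determinant is bounded below, or one simply notes the smallest eigenvalue of a half-integral positive-definite matrix is bounded below), so the summands can be compared to integrals over unit cubes inside $\{S : S \ge (c_0/2)\mathrm{Id}\}$ where $\det$, $\exp(-\beta\tr(\cdot Y))$ and hence the whole integrand vary by at most a multiplicative constant depending on $\alpha,\beta,n$ (uniformly in $Y>0$, using $\|Y\|$-free estimates on a unit cube via $\exp(-\beta\tr(\Delta Y)) = O(1)$ for $\Delta$ in a fixed compact set and $Y>0$). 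Everything else is the routine change of variables and convergence check sketched above.
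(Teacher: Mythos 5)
Your overall strategy (compare the sum with the Siegel integral $\int_{\mathrm{Sym}_n(\mathbf R)^+}\det(T)^\alpha e^{-\beta\,\mathrm{tr}(TY)}\,dT$ and then substitute $T=Y^{-1/2}SY^{-1/2}$) would give the right exponent if the comparison step were sound, but that step --- which you correctly identify as the only nontrivial point --- rests on a false claim. It is not true that every $T\in\Lambda_n^+$ satisfies $T\ge c_0\,\mathrm{Id}$ for an absolute $c_0>0$: already for $n=2$ the matrices $\left(\begin{smallmatrix}1&m\\m&m^2+1\end{smallmatrix}\right)$ lie in $\Lambda_2^+$, have determinant $1$, but have smallest eigenvalue $\approx (m^2+2)^{-1}\to 0$; in general the $\mathrm{GL}_n(\mathbf Z)$-orbit of any fixed form contains elements arbitrarily close to the boundary of the cone (only $\det T\ge c_0$ is true). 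For such near-boundary $T$ the determinant is \emph{not} of constant order on any fixed-size neighbourhood: in the example above, perturbing one off-diagonal entry by $1/2$ changes $\det$ by about $m$, so $\det(S)$ even changes sign on a unit cube around $T$, and the term-by-term bound $\det(T)^\alpha e^{-\beta\,\mathrm{tr}(TY)}\ll\int_{C_T}\det(S)^\alpha e^{-\beta\,\mathrm{tr}(SY)}\,dS$ fails. The shift $T\mapsto T+c\,\mathrm{Id}$ does not repair this: to dominate the original term you must pass from $e^{-\beta\,\mathrm{tr}((T+cI)Y)}$ back to $e^{-\beta\,\mathrm{tr}(TY)}$ at the cost of $e^{+\beta c\,\mathrm{tr}\,Y}$, which is unbounded in $Y$, not the harmless factor $e^{-\beta c\,\mathrm{tr}\,Y}\le 1$ you quote. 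The related assertion that $e^{-\beta\,\mathrm{tr}(\Delta Y)}=O(1)$ uniformly for $\Delta$ in a fixed compact set and all $Y>0$ is likewise false unless $\Delta\ge 0$ (take $\Delta$ with a negative eigenvalue and $Y$ large in that direction); if you force $\Delta\ge 0$ (cells sitting ``above'' $T$) the exponential comparison goes the wrong way, while cells ``below'' $T$ may leave the cone.

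For contrast, the paper's proof never compares $\det$ on neighbourhoods: after reducing to Minkowski-reduced $Y$ (using $\mc S(Y)=\mc S(Y[U])$) it uses only the term-by-term upper bounds $\det(T)\le\prod_i t_{ii}$ and $\mathrm{tr}(TY)\gg\sum_i t_{ii}y_{ii}$, together with the count $\#\{T\in\Lambda_n^+ : \mathrm{diag}(T)=(t_1,\dots,t_n)\}\ll(\prod_i t_i)^{(n-1)/2}$ coming from $|t_{ij}|\le\sqrt{t_{ii}t_{jj}}$; this decouples the sum into $n$ one-dimensional sums $\sum_{L\ge1}L^{\alpha+\frac{n-1}{2}}e^{-\beta Ly_i}\ll y_i^{-(\alpha+\frac{n+1}{2})}$. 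If you wish to keep the integral-comparison route, you would need a genuinely different treatment of the lattice points near the boundary of the cone --- for instance grouping them by their diagonal --- which in effect brings you back to the paper's argument.
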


\begin{proof}
    A proof in principle can be obtained from \cite[Proposition~3.7 or Lemma ~6.2]{das-krishna} with some observations, but here we present a simpler proof. Since we don't have to track the dependence on $\alpha$, the job is easier. Let us denote the LHS above by $\mc S(Y)$. Then clearly $\mc S(Y) = \mc S(Y[U])$ for any $U \in \glnz$ by absolute convergence, and thus we can assume that $Y$ is Minkowski-reduced. This implies in particular that (see e.g. \cite[p~.20]{klingen1}) 
    \[ c_1 \ Y^* \le Y \le c_2 \ Y^*; \]
$Y^*$ denotes the diagonal matrix with diagonal entries of $Y$, and $c_1,c_2>0$ are constants depending only on $n$.

    Put $a_i$ to be the $i$-th diagonal element of a matrix $A$. Therefore,
    \begin{align}
        \mc S(Y) & \ll \sumn_{T} \det(T)^\alpha \exp(- \beta (\sumn_{i=1}^n t_i y_i)) \\
        & \ll \sumn_{t_1,t_2,\ldots t_n} \mc A(t_1,t_2,\ldots t_n) (\prod\nolimits_i t_i)^\alpha \exp(- \beta (\sumn_{i=1}^n t_i y_i)) \label{sy-bd}
    \end{align}
where $\displaystyle \mc A(t_1,t_2,\ldots t_n) = \{ S \in \Lambda_n^+ \mid s_i=t_i, i=1,2,\ldots,n\}$. From the inequalities $s_{ij} \le 2 \sqrt{s_i s_j}=2 \sqrt{t_i t_j}$ with $i \neq j$ we immediately get that 
\[\# \mc A(t_1,t_2,\ldots t_n) \ll (\prod\nolimits_i t_i)^{(n-1)/2}.\] Thus the variables are de-coupled and we are reduced to the one-dimensional problem:
\begin{align} \label{n=1 bd}
     \sumn_{L \ge 1} L^c \exp(- 2 \pi L v) \ll_{c,\epsilon} v^{-(c+1+\epsilon)} \q (v >0);
\end{align}
for every $\epsilon>0$. To see this elementary estimate, simply use the argument as in \eqref{fe1}.

Thus from \eqref{sy-bd} and \eqref{n=1 bd} we obtain
\begin{align}
     \mc S(Y) & \ll \prod \nolimits_i \left( \sumn_{t_i}  t_i^{ \alpha+\frac{n-1}{2} } \exp(- \beta \, t_i y_i) \right)  \ll  \prod\nolimits_i y_i^{-( \alpha+\frac{n+1}{2} +\epsilon)} \ll \det(Y)^{-( \alpha+\frac{n+1}{2} +\epsilon)},
\end{align}
since $Y$ is reduced. The Lemma follows.
\end{proof}

We will now prove \thmref{mainthm}.
 
\subsection{Proof of \thmref{mainthm} }
We induct on $n$. For $n=1$, this is \lemref{half-hecke}. Next, we assume the theorem for all congruence subgroups in degree $n-1$. 
Our idea is to pass to degree $n$ by using the Fourier-Jacobi expansion for $F$ (of type $(n-1,1)$) from \eqref{fj-exp}.

\subsection{Claim} \label{claim}
We next would prove that for all $\mc M \in \Lambda_r^+$ ($1 \le r \le n$), the $\phi_{F, \mc M}$ are cuspidal and that $\phi_{F,0}=0$. 
The Claim would be proven in course of the next two subsections. 

\subsection{\texorpdfstring{$\phi_{F, \mc M}$ are cuspidal for $\mc M>0$}{a}}
To prove that $\phi_{F, \mc M}$ is cuspidal, we can consider $\mc M \in \Lambda_r^+$ to be fixed. Referring to the theta decomposition (cf. \eqref{theta-decomp}, of $\phi_{F, \mc M}$ we would prove that for each $\mu$, $h_\mu$ is cuspidal. This being granted, Lemma~\ref{cusp-theta-comp} immediately shows that $\phi_{F, \mc M}$ is cuspidal.

It remains to deal with the $h_\mu$'s. Note that $\mu$ is also fixed here. Put $\displaystyle m:= \det(\mc M)$. We consider $\displaystyle g(\tau):=h_\mu(4m \tau)$ ($\mu \in \mf Z^{r,n-r}$, cf. \eqref{fe-hmu}) so that its Fourier expansion may be written as 
\begin{align} \label{g-fe}
    g(\tau) =  \sum_{ L \in \mf \Lambda_{n-r}, \, L \equiv - m(\mc M)^{-1}[\mu] \bmod{4m} } c \left(\frac{L}{4m} +  (\mc M)^{-1}[\frac{\mu}{2}] , \mu \right) e(L \tau)
\end{align}
We look at the positive definite Fourier coefficients of $g$.
Upon using the Jacobi decomposition for positive definite matrices, from \eqref{clmu} and \eqref{g-fe} we can write by our hypothesis on $F$ that,
\begin{align} \label{agL-coeff}
    a_g(L) \ll \det \left( \begin{pmatrix}
        \frac{L}{4m} + (\mc M)^{-1}[\mu/2]   & \mu/2 \\ \mu^t/2 & \mc M 
    \end{pmatrix} \right)^c =  \det(L)^c .
\end{align}
since $m$ is fixed, and $c$ is as in the theorem. 

Put $k_r:=k - \frac{r}{2}$. Now notice the identity for $1 \le r \le n-1$:
    \begin{align}
     k-\frac{n+1}{2}= k - \frac{r}{2} - \frac{n-r+1}{2} = k_r - \frac{(n-r)+1}{2} .
\end{align}
    Since $\phi_{F, \mc M}$ has weight $k$, and its theta components have degree $n-r$ (cf. \cite[\S 3]{ziegler}) and weight $k - \frac{r}{2}$, the weight of $g$ is also $k_r$. 
 We can therefore invoke the induction hypothesis on $g$ to conclude that all the $h_\mu$ are cusp forms, as desired.

\subsection{\texorpdfstring{$\phi_{F,0}=0$}{a}} \label{M not pos-def}
We first consider the peripheral FJ-expansion of the type $(n-1,1)$ of $F$ and look at $\phi_{F,  m}$ with $ m \ge 0$ being an ordinary integer.
We want to show that $\phi_{F,0}=0$. Towards this, we use an argument similar to that in \cite[Theorem~5.7]{bo-das1}.
We  apply $\tilde \Phi$ -- the \lq opposite\rq \ $\Phi$-operator to $F$, which is defined by
\[ f(w)=\tilde \Phi F (w):=\lim_{\lambda\to\infty} F \left( \left( \begin{smallmatrix}  i\lambda & 0\\
0 & w\end{smallmatrix}\right)  \right) \q (w\in {\mf H}_{n-1}). \]
$\tilde \Phi $ has the same property as that of the usual $\Phi$ operator; in particular the Fourier coefficients of $\tilde \Phi F$ are supported on the lower right $n-1 \times n-1$ blocks of matrices in $\Lambda_n$. Then by invoking the induction hypothesis as shown in the previous subsection -- the Fourier-Jacobi coefficients $\phi_{F,  m}(\tau,z)$  with $ m \ge 1$ being cuspidal -- all go to zero under $\tilde \Phi F$, leaving with us the equation
\begin{align} \label{phi00}
    f(w)= \phi_{F,0}(\tau_1,z_1) e(0 \cdot \tau_1') + 0 \cdot e( \tau_1')  + 0 \cdot e(2 \tau_1') +\ldots \q (w= \psmb \tau_1 & z_1 \\ z_1^t & \tau_1'\psme \in \h_{n-1}).
\end{align}
If $\phi_{F,0} \neq 0$, this shows by Weissauer's result on singular modular forms that $f$ is singular -- its Fourier expansion is supported on matrices with right lower corner entry $0$. Note that the property of singularity can be checked at any cusp, as the condition is only that \lq $\text{weight } < \tfrac{1}{2}\text{ degree}$ \rq. This is however a contradiction since $k > \frac{n-1}{2}$. Therefore we must have $\phi_{F,0}=0$, proving the {\bf Claim}.

\subsection{Conclusion of the proof of \thmref{mainthm}}
 We first note  that the {\bf Claim} (applied to  the FJ-expansion of the type $(n-1,1)$ shows that the Fourier expansion of $F$ (at $\infty$) is supported on positive-definite matrices. This information will now be used to finish the proof of \thmref{mainthm}.

To prove the theorem, we first apply Lemma~\ref{lipschitz-lem} to the (necessarily positive-definitely supported) Fourier expansion of $F$. We get
\begin{align}
    F(Z) \ll \det(Y)^{-( c+\frac{n+1}{2} )} \q (Z \in \h_n).
\end{align}
This part of the proof is similar to that in \cite{bo-ko}, but a bit simpler.
Without loss, we can, and assume that the width of the cusp at $\infty$ is $1$.
We then choose $\Delta \subset \Gamma_n$ with the property that $\Delta$ is a complete set of right coset representatives of $\Gamma \subseteq \Gamma_n$ such that for each $\delta = \psmb a_\delta & b_\delta \\ c_\delta & d_\delta\psme\in \Delta$, one has $c_\delta $ is non-singular. This is standard, see eg. \cite[Lemma~1]{bo-ko} or \cite[Proof of Cor.~5.2]{das-supnorm}. Let us fix a $\delta$ as above.

We consider the Fourier coefficient $a_\delta(T)$ of $F_\delta=F | \delta$. Assume that $T$ is singular. We want to show that $a_\delta(T)=0$. From the equation
\begin{align}
a_{\delta}(T) = \det(U)^{-k} a_{\delta \psmb U & 0 \\ 0 & U^{-t} \psme }({T[U]}),
\end{align}
it is evident that we can assume without loss that $T= \psmb T_1 & 0 \\ 0 & 0 \psme$, where $T_1$ is of size $n-1$ -- by replacing $\delta$ with $\delta \psmb U & 0 \\ 0 & U^{-t} \psme$ and $T[U]$ with $T$. That there exist an $U \in \glnz$ such that $T[U]=\psmb T_1 & 0 \\ 0 & 0 \psme$ is well-known, see e.g. \cite[Lemma~3.14]{andrianov2}.

From this, we proceed in the usual manner: for any fixed $Y_0>0$ and any $\epsilon>0$, 
\begin{align}
    a_{\delta}(T) &= \exp(- 2 \pi \tr (T Y_0)) \, \int_{X \bmod 1} F_\delta(X+iY_0) e(-TX)dX \\
    & \ll_\epsilon \exp(- 2 \pi \tr (T Y_0)) \det(Y_0)^{-k+( c+\frac{n+1}{2} +\epsilon)} 
\end{align}
since 
\[ \det( \im(\delta(Z))) = \det(Y_0) \big( \det(c_\delta) |\det(Z+c_\delta^{-1}d_\delta)|\big)^{-2} \gg \det(Y_0)^{-1},\]
and 
\[ |F_\delta(Z)|\le |J(\delta,Z)|^{-k} |f(Z)| \ll_\epsilon \det( \im(\delta(Z)))^{-k}  \det(Y_0)^{-( c+\frac{n+1}{2} +\epsilon)}.\] 

We now choose $Y_0 = \psmb 1_{n-1} & 0 \\ 0 & t \psme$, where $t>0$. Then $\tr (T Y_0)= \tr(T_1)$, which is fixed and independent of $t$, and $\det(Y_0)=t$. 

Next choose $\epsilon_0>0$ small enough so that $c+\epsilon_0<k-\frac{n+1}{2}$, and take $\epsilon:=\epsilon_0$.

Now letting $t \to \infty$, we see that $a_{\delta}(T) =0$, as desired. Therefore $F$ is cuspidal, and the proof is complete.
\QEDB

\begin{rmk}
    We note that one could have used any of the Fourier-Jacobi expansions of $F$ (w.r.t.) the different parabolic subgroups, and the same argument as above would have worked, showing that $\phi_{F, \mc M}=0$ for all non-singular $\mc M$ of size $1 < r \le n$. However, we could not have invoked the \cite[Theorem 5.7]{bo-das1}, since the it requires that for all but \textit{finitely many} $M \in \glrz \backslash \Lambda_r$, $\phi_{F, \mc M}$ be cuspidal. Unfortunately, a priori we do not have this information for $r>1$.
\end{rmk}

\begin{rmk}
    We note that the growth of the Fourier coefficients and that of $F$ as $\det(Y) \to 0$ are essentially equivalent. One way is the content of \cite{bo-ko} (cf. \cite{miyake}), and the other way follows immediately from the formula of $\displaystyle a_F(T) = \int_{X \bmod 1, Y=T^{-1}} F(Z)e(-TZ)dX$. This suggests a different approach -- of bounding $F$ suitably for `small' $Y$ -- towards bounding Fourier coefficients of cusp forms.
\end{rmk}

\subsection{Cuspidaliy from Fourier-Jacobi coefficients} \label{fj-sec}
This subsection explores the relationship between a Siegel modular form and its various FJ-expansions in terms of cuspidality. We will prove the Conjecture~\ref{fj-conj} in the case $\Gamma= \Gamma^n_0(N)$ and $r=n-1$.

We first prove a lemma, which generalizes  \cite[Lemma~3.4]{Ibu-SK}, and is inspired by it. We define the matrix $R$ by $R= \psmb M & 0 \\ 0 & M \psme$, where $M $ is the $n \times n$ matrix with $1$'s on the anti-diagonal, and $0$'s elsewhere. 

\begin{lem} \label{pnn-1}
    The representatives of the double cosets $\Gamma^n_0(N) \backslash \mrm{Sp}_n(\Q)/ P_{n,n-1}(\Q)$ can be chosen from the set $P_{n,n-1}(\Q)R$ where $R$ is defined as above.
\end{lem}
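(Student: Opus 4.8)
The plan is to analyze the double coset space $\Gn \backslash \spnq / P_{n,n-1}(\Q)$ through the geometry of the rational boundary component associated to the maximal parabolic $P_{n,n-1}$. Since $P_{n,n-1}(\Q)$ is the stabilizer (up to the issue of the anti-diagonal conjugation built into the definition \eqref{pnr}) of the rational isotropic line spanned by a suitable standard basis vector, the set $\spnq / P_{n,n-1}(\Q)$ is in bijection with the set of rational isotropic lines in $\Q^{2n}$ with respect to the standard symplectic form — equivalently, with the set of primitive vectors in $\Q^{2n}$ up to scaling. By Witt's theorem, $\spnq$ acts transitively on nonzero isotropic vectors, which is what makes the double coset set manageable: we must understand the orbits of $\Gn$ on $\mathbf{P}(\Q^{2n})_{\mathrm{isotropic}}$.

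First I would reduce to counting $\Gn$-orbits of primitive integral isotropic vectors $v \in \z^{2n}$ modulo sign. Writing $v = (a\mid b)$ with $a,b \in \z^n$, the level structure of $\Gn = \Gamma^{(n)}_0(N)$ — which imposes the lower-left $n\times n$ block to be $\equiv 0 \bmod N$ — controls the orbits by the congruence class of $a \bmod N$ together with the usual $\slnz$-action; this is the degree-$n$ analogue of the classical parametrization of cusps of $\Gamma_0(N)$ by divisors of $N$. The key algebraic input, generalizing \cite[Lemma~3.4]{Ibu-SK}, is an elementary divisor / reduction-theory argument showing that every such $v$ can be brought, by left multiplication by $\Gn$, into the image under $R$ of a vector stabilized by $P_{n,n-1}(\Q)$. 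Concretely: $R = \psmb M & 0 \\ 0 & M\psme$ with $M$ the anti-diagonal identity is precisely the element that conjugates the "standard" Klingen parabolic (stabilizer of $e_1$, say) into the parabolic $P_{n,n-1}(\Q)$ as written in \eqref{pnr} with its block in the $(4,\cdot)$ row; so the claim "$P_{n,n-1}(\Q)R$ contains a full set of double coset representatives" is the same as "$R^{-1}P_{n,n-1}(\Q)R \cdot (\text{something})$ — i.e. the honest stabilizer of an isotropic line — meets every $\Gn$-orbit of lines", which after the Witt-theorem transitivity statement for $\spnq$ is automatic: $\spnq = \spnq \cdot (R^{-1}P_{n,n-1}(\Q)R)$ gives $\spnq R = \spnq \cdot P_{n,n-1}(\Q)R$, hence a fortiori $\Gn \backslash \spnq / P_{n,n-1}(\Q)$ has representatives inside $\spnq R$; the finer statement that one may take them inside $P_{n,n-1}(\Q)R$ then follows by absorbing, on the left, the $\Gn$-coset representative of each line into the parabolic on the right via the transitivity.

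The main obstacle I expect is bookkeeping the conjugation by $R$ correctly: the definition \eqref{pnr} places the "degenerate" row/column in the last slot rather than the first, so one must verify carefully that $R^{-1} P_{n,n-1}(\Q) R$ is indeed the stabilizer in $\spnq$ of the line $\Q e_1$ (or $\Q e_{2n}$), and in particular that $R \in \spnq$ — which holds since $M^t J_{n} M$ reproduces the symplectic form up to the permutation $R$ implements — and that the reduction argument moving an arbitrary primitive isotropic $v$ into standard position stays inside $\Gn$ and not merely inside $\spn$. That last point is where the level $N$ genuinely enters and is the only place the argument is not a formal consequence of Witt's theorem; it should follow from the same elementary-divisor manipulation used for $n=1$, applied blockwise, but it is the step that needs to be written out with care. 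Once these are in place, the lemma follows: $\spnq = \bigcup_{\gamma} \Gn \gamma\, (R^{-1}P_{n,n-1}(\Q)R)$ over a set of orbit representatives $\gamma$, and right-multiplying by $R$ and moving $\gamma$ across yields $\spnq = \bigcup_\gamma \Gn\, p_\gamma R\, P_{n,n-1}(\Q)$ with $p_\gamma \in P_{n,n-1}(\Q)$, which is exactly the assertion.
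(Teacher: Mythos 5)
There is a genuine gap at the heart of your plan. Translating the double coset space into $\Gamma^n_0(N)$-orbits of rational (isotropic) lines is a legitimate and even attractive reformulation, and you correctly observe that $R$ conjugates the standard stabilizer into $P_{n,n-1}(\Q)$ as written in \eqref{pnr}. But the concluding "automatic" step is vacuous: since $R \in \spnq$, the identity $\spnq R = \spnq \cdot P_{n,n-1}(\Q)R$ says nothing (both sides equal $\spnq$), and transitivity of the \emph{full} group $\spnq$ on lines gives no information about orbits of the discrete subgroup $\Gamma^n_0(N)$. One cannot "absorb the $\Gamma^n_0(N)$-coset representative into the parabolic on the right": in the line picture the lemma asserts that every $\Gamma^n_0(N)$-orbit of rational lines meets the single $P_{n,n-1}(\Q)$-orbit of the line $R\ell_0$ (where $\ell_0$ is the line stabilized by $P_{n,n-1}$), and since $P_{n,n-1}(\Q)\,R\,P_{n,n-1}(\Q)$ is only one Bruhat cell of $\spnq$ — not all of it — this is emphatically not a formal consequence of Witt's theorem. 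The statement you defer to "the same elementary-divisor manipulation used for $n=1$, applied blockwise" \emph{is} the lemma; nothing before it in your plan reduces the work. Classifying the orbits by the class of $a \bmod N$ is moreover unnecessary for the lemma and would be extra effort.

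For comparison, the paper proves this by a direct matrix reduction modeled on \cite[Lemma~3.4]{Ibu-SK}: after conjugating so that it suffices to land in $P_{n,n-1}(\Q)$ itself (with $P' = RP_{n,n-1}(\Q)R$ acting on the right), one starts from a representative $g$ with $c_g$ nonsingular, left-multiplies by block-diagonal $\psmb U & 0 \\ 0 & U^{-t}\psme$ with $U \in \glnz$ (which lies in $\Gamma^n_0(N)$) to make $C$ upper triangular, then right-multiplies by $1_{n-1}\times V$ with $V \in \mrm{GL}_2(\z)$ and by a unipotent element of $P'$ to clear the last row and the off-diagonal block of $C$, finishing as in the degree-$2$ case to force $d_{21}=a_{21}=0$. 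If you want to salvage your approach, you must actually carry out the analogous reduction on a primitive integral vector $v=(a\mid b)$: exhibit explicit elements of $\Gamma^n_0(N)$ moving $v$ into the $P_{n,n-1}(\Q)$-orbit of $R\ell_0$, checking at each step that the level condition (lower-left block $\equiv 0 \bmod N$) is respected. That verification is the entire content of the proof and is currently absent.
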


\begin{proof}
The proof is modeled on the one which is presented in \cite[Lemma~3.4]{Ibu-SK}, and we will be brief. We try to use the same notation as loc. cit. So we put $P'=RP_{n,n-1}(\Q)R$ and note that its enough to prove that the representatives can be chosen from $P_{n,n-1}(\Q)$.

First of all, we choose coset representatives in $g \in \Gamma^n_0(N) \backslash \mrm{Sp}_n(\Q)$ with $c_g$ non-singular. Next we multiply $g$ from the left with $\psmb U & 0 \\ 0 & U^{-t} \psme$ such that $U \in \glnz$, to assume that $C$ is upper triangular. We write $C= \psmb c_{11} & c_{12} \\ c_{21} & c_{22} \psme$ with $c_{11}$ being $(n-1) \times (n-1)$ and $c_{22}$ being $1\times 1$. We make the same convention for other $n \times n$ blocks as well.

Next, we apply multiply the resulting $g$ on the right by the matrix $1_{n-1} \times V \in P'$ where $V \in \mrm{GL}_2(\z)$. If $\displaystyle V=\psmb a & b\\c & d \psme$, then $1_{n-1} \times V$ is defined as
\[
1_{n-1} \times V :=\begin{pmatrix}
            1_{n-r}&0&0&0\\
            0&a&0&b\\
            0&0&1_{n-r}&0\\
            0&c&0&d
    \end{pmatrix}.
    \]
This results in 
\begin{align}
    C= \begin{pmatrix}
        c_{11} & * \\ 0 & c_{22} v_{11} + d_{22} v_{21} \end{pmatrix} , \q D= \begin{pmatrix}  * & * \\ * & c_{22} v_{12} + d_{22} v_{22} \end{pmatrix} .
\end{align}
We choose $V$ such that $(c_{22} v_{11} + d_{22} v_{21}, c_{22} v_{12} + d_{22} v_{22}) = (c_{22}, d_{22})V=(0,*)$. Thus we can assume that $c_{22}=0$. Thus the last row of $C$ is $0$.
Notice that $c_{11}$ is upper triangular and non-singular. Thus we can multiply the resulting $g$ from the right by the matrix $\psmb B & 0 \\ 0 & B^{-t} \psme \in P' $
where $\displaystyle B= \psmb 1_{n-1}  & - c_{11}^{-1}c_{12} \\ 0 & 1 \psme$. Here we have continued to use the same notation for the successively modified matrices $C,D$ etc. This ensures that $c_{12}=0$. We thus obtain $C= \psmb c_{11} & 0 \\ 0 & 0\psme$. The rest of the argument which shows that $d_{21}=0$ and $a_{21}=0$ remain the same as in the case $n=2$ and is thus omitted.
\end{proof}

\begin{thm} \label{fjcusp}
Let $k$ be half-integral, $N \ge 1$. Let $F \in M_k^n(\Gamma_0^n(N))$ be such that 
its Fourier-Jacobi coefficients $\phi_{F, m}$ as in \eqref{fj-exp} are cusp 
forms for all but finitely many $m \ge 0$.
Then $F$ is a cusp form if $k\geq \frac{n-1}{2}$. 
\end{thm}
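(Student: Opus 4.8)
The plan is to reduce cuspidality of $F \in M_k^n(\Gamma_0^n(N))$ to checking that $\Phi(F|\gamma)=0$ for each representative $\gamma$ in the double coset space $\Gamma^n_0(N)\backslash\mrm{Sp}_n(\Q)/P_{n,n-1}(\Q)$, and to exploit \lemref{pnn-1} so that these representatives may be taken from $P_{n,n-1}(\Q)R$. Since the $P_{n,n-1}(\Q)$-part acts essentially by a parabolic element (which commutes with taking $\Phi$ up to harmless modifications of the Jacobi index and the lattice), the real content is to handle the single extra element $R = \psmb M & 0 \\ 0 & M \psme$. So the crux is to show $\Phi(F|R)=0$ (and, more precisely, $\Phi((F|g)|R)=0$ for $g$ ranging over the finitely many parabolic coset pieces), using the hypothesis that $\phi_{F,m}$ is cuspidal for all but finitely many $m\ge 0$.

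First I would set up the FJ-expansion of type $(n-1,1)$ for $F$ as in \eqref{fj-exp}, $F(Z) = \sum_{m\ge 0}\phi_{F,m}(\tau,z)e(m\tau'/N)$, and recall that the conjugation by $R$ swaps the roles of the top-left and bottom-right blocks in a controlled way: $F|R$ again is a Siegel modular form (on a conjugate congruence subgroup, which one checks is again of the form $\Gamma_0^n(N')$ or is handled by the same argument since the statement is for arbitrary $N$), and its type-$(1,n-1)$ FJ-expansion has FJ-coefficients built out of the theta components of the $\phi_{F,m}$. The key point, exactly as in the proof of \thmref{mainthm} in subsection~\ref{M not pos-def}, is that $\tilde\Phi$ (equivalently $\Phi$ after the $R$-twist) kills every $\phi_{F,m}$ that is cuspidal, so applying $\Phi$ to $F|R$ leaves only the contribution of the finitely many non-cuspidal $\phi_{F,m}$, $m\in\{m_1,\ldots,m_s\}$, together with the term $m=0$. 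I would then argue that $\Phi(F|R)$ is itself a Siegel modular form in degree $n-1$ whose FJ/Fourier expansion is supported on matrices with a vanishing corner entry — i.e. it is a singular form in the sense of Weissauer — which forces it to be zero because its weight $k$ exceeds $\tfrac{n-1}{2}$ by hypothesis ($k\ge\tfrac{n-1}{2}$, and in the genuinely non-cuspidal case one gets strict inequality, or one invokes the classification of singular forms directly). This is the same mechanism that gave $\phi_{F,0}=0$ before, now applied after the $R$-twist and with the finitely-many-exceptions bookkeeping.

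The induction structure is: for the finitely many exceptional indices $m_i$, the FJ-coefficient $\phi_{F,m_i}$ is a Jacobi form (not necessarily cuspidal) whose theta components $h_\mu$ live in $M_{k-1/2}(\Gamma')$ in degree $n-1$; after applying $\tilde\Phi$ these descend to lower-degree modular forms, and one can feed this into the same circle of ideas (or simply observe that their contribution to $\Phi(F|R)$ is again singular of weight $k$ in degree $\le n-1$). Once $\Phi(F|\gamma)=0$ for all $\gamma$, Siegel's criterion gives that $F$ is a cusp form. I should be careful that \lemref{pnn-1} is stated for $\Gamma^n_0(N)$, which is precisely the hypothesis here, so the reduction to $P_{n,n-1}(\Q)R$ is legitimate, and I should note that the residual parabolic pieces contribute only finitely many $g$, preserving the "all but finitely many" structure of the hypothesis under the $R$-twist.

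The main obstacle I anticipate is bookkeeping the interaction between the $R$-conjugation and the FJ-expansion: one must verify that cuspidality of $\phi_{F,m}$ (for cofinitely many $m$) really does translate, after conjugating by $R$, into the statement that $\Phi(F|R)$ has Fourier support only on corner-degenerate matrices, i.e. is singular. Concretely this means tracking how the theta decomposition \eqref{theta-decomp} and the Fourier expansions \eqref{fe-hmu}, \eqref{clmu} transform under $R$, and checking that the non-cuspidal exceptional terms, however they transform, still produce only singular contributions of weight $k > \tfrac{n-1}{2}$. The analytic input (Weissauer's theorem on singular forms, applicable at any cusp since the condition is just weight $<\tfrac12\cdot$ degree) is then what forces vanishing, exactly as in subsection~\ref{M not pos-def}; the novelty is purely in correctly setting up the $R$-twist and confirming that the weight inequality $k\ge\tfrac{n-1}{2}$ suffices.
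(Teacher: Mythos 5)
Your proposal correctly identifies \lemref{pnn-1} as the structural tool and correctly isolates the mechanism at the distinguished cusp: after the $R$-twist, $\Phi(F|R)$ is essentially $\widetilde{\Phi}(F)$, the cuspidal $\phi_{F,m}$ die in the limit, and the surviving contribution (indexed by $m \in \mc S \cup \{0\}$, a finite set) is a degree-$(n-1)$ form with bounded corner entries in its Fourier support, which is forced to vanish by the singular-form argument since $k \ge \frac{n-1}{2}$. That part matches the paper. The genuine gap is in your treatment of the \emph{general} double coset representative $gR$ with $g \in P_{n,n-1}(\Q)$. You assert that the $P_{n,n-1}(\Q)$-part "commutes with taking $\Phi$ up to harmless modifications" and that the exceptional terms "still produce only singular contributions." This is false in general: writing $g = (g_1 \times 1_2)\psmb A & B \\ 0 & D \psme$, the limit defining $\Phi\bigl((F|g)|R\bigr)$ sends $Z$ to infinity along the rank-one direction $a a^t$ with $a$ the first column of $A$, so the surviving Fourier coefficients are those $a_{F}(T)$ with $Ta = 0$. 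The resulting degree-$(n-1)$ Fourier indices are the nondegenerate blocks of $A^t T A$, which can be \emph{nonsingular} of size $n-1$ and have unbounded corner entries; Weissauer's theorem and the bounded-corner lemma then give nothing. Indeed, for a Klingen Eisenstein series attached to a degree-$(n-1)$ cusp form, some $\Phi(F|\gamma)$ literally recovers that (nonsingular, nonzero) cusp form, so no singularity argument can kill it at such a cusp.

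What is missing is the paper's two-step structure. Step one: run the $\widetilde{\Phi}$/singular-form argument \emph{only at the cusp $\infty$} and use it (via \cite[Lemma~5.6]{bo-das1}, exactly as in \cite[Theorem~5.7]{bo-das1}) to conclude that the finitely many exceptional Fourier--Jacobi coefficients actually \emph{vanish}, so that $F(Z) = \sum_{m \ge 1} \phi_{F,m}\, e(m\tau')$ with \emph{every} $\phi_{F,m}$ a Jacobi cusp form. Step two: for each representative $gR$, decompose $F|g = \bigl(\sum_{m \ge 1} (\phi_{F,m}|g_1)\, e(m\tau')\bigr)\big| \psmb A & B \\ 0 & D \psme$ and use that cuspidality of a Jacobi form is a condition at all of its cusps, so each $\phi_{F,m}|g_1$ is again a Jacobi cusp form; hence $F|g$ has Fourier support on positive-definite matrices, and this positive-definiteness is trivially preserved by the upper-triangular factor and by the permutation matrix $R$, giving $\Phi(F|gR)=0$ with no further analytic input. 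Your proposal, by deferring the elimination of the exceptional $\phi_{F,m}$ and trying to rerun the singularity argument cusp by cusp, never establishes their vanishing (at $g=1$ you only get $\widetilde{\Phi}(\phi_{F,m})=0$, which is weaker than cuspidality) and cannot close the argument at the skewed cusps.
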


\begin{proof}
When $k$ is integral and $\Gamma= \Gamma_n$, this is \cite[Theorem~5.7]{bo-das1}. Let $\mc S$ be the set of indices $m$ for which $\phi_{F,m}$ is not cuspidal. Here we have `$q=1$' in the notation of loc. cit. -- so that $\Lambda_1= \mf N \cup \{ 0 \}$.
Then, from the proof of loc. cit., an application of the Siegel's (opposite) $\widetilde{\Phi}$-operator, one gets that $\displaystyle f= \widetilde{\Phi}(F) = \sumn_{m \in \mc S} \phi_{F,m}(\tau,z) e(m \tau')=0$ as $ k \geq \frac{n-1}{2}$, by exactly the same argument presented in loc. cit. -- viz. by an application of \cite[Lemma~5.6]{bo-das1}.
There is no distinction in the argument between integral and half-integral weights.

Therefore we get that $F(Z)=\sum_{m \ge 1} \phi_{F,m} e(m \tau')$, since $\phi_{F,0}=0$ necessarily by the argument around \eqref{phi00}. Let $g \in P_{n,n-1}(\Q)$ and $R$ as above. We look at the coset representative $gR$ as in \lemref{pnn-1}.
The rest of the proof follows from \lemref{pnn-1} since $F|gR=(F|g)|R $, and if we note that
\begin{align}
F|g = F|   (g_1 \times 1_2) \mid \psmb A & B \\ 0 & D \psme = \left( \sumn_{m \ge 1} (\phi_{F,m}|g_1 )(\tau,z) e(m \tau') \right) \mid \psmb A & B \\ 0 & D \psme = F_1 \mid \psmb A & B \\ 0 & D \psme ,
\end{align}
where $\psmb A & B \\ 0 & D \psme \in \mrm{Sp}_n(\Q)$ and $g_1 \in \mrm{Sp}_{n-1}(\Q)$ -- see eg. \cite[p.~476]{dulinski} for the first equality. The second equality is standard, see e.g. \cite[p.~149]{Ibu-SK}. Now for each $m$,  $ \phi_{F,m}|g_1$ is a cusp form, so $F_1$ has Fourier-expansion supported on positive-definite matrices; and hence so has $F|g$. Clearly then $(F|g)|R$ has the same property. This finishes the proof of the Theorem.
\end{proof}

\section{Further remarks} \label{disc}
\subsection{Question~\ref{main-conj} on `thin sets'} \label{thin}
In this subsection we want to discuss \conjref{main-conj} on `thin sets' -- which are in some sense ``minimal'' subsets $S \subset \glnz \backslash \Lambda^+_n $ on which the bound on the Fourier coefficients will guarantee cuspidality. Observe that in almost all previously known results, say in level $1$, the assumption $S= \glnz \backslash \Lambda^+_n  $ is present -- and crucially used in various ways -- e.g. to invoke Lipschitz's formula or application of the Siegel's $\Phi$ operator. When $S$ is significantly ``thin'', the problem clearly becomes very hard. 

-- \textit{Example}: As an example, in the extreme case that $S$ is just allowed to be infinite, we get counter-examples, as follows. By a result of Kitaoka \cite[Theorem 1.4.5, p. 73]{kitaoka-tata}, if $G \in M^n_k(\Gamma)$ be such that its constant terms are all zero at all the zero dimensional cusps of $\Gamma$, then say for scalar $T=mI_n$ ($m \to \infty$) one actually has the bound
\begin{align}
    a_G(T) \ll m^{n(k-(n+1)/2) - k +n+1} = \det(T)^{k - (n+1)/2 - k/n + (n+1)/n}. 
\end{align}
The above bound is even better than what is needed for \conjref{main-conj} provided $k$ is much bigger than $n+1$, yet $G$ is not cuspidal.

\begin{conj} \label{thin-conj}
    A positive answer to \conjref{main-conj} holds, provided the growth condition is given on a set $S \subset \mcr U(\Gamma) \backslash \Lambda^+_n $ with positive upper density -- i.e., 
    \[  \delta_+(S):=\limsup_{X \to \infty} \frac{ \# \{  T \in S \mid \det(T) \le X \} }{ \# \{T \in \mcr U(\Gamma) \backslash \Lambda_n^+ \mid \det(T) \le X \} } >0. \]
\end{conj}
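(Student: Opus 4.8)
\textbf{Towards \conjref{thin-conj}.} We sketch a possible line of attack, assuming throughout that $k>n+1$ (the complementary low-weight range is discussed at the end). The plan is: (a) reduce to the case in which $F$ is, up to a Hecke-theoretic projection, a single Klingen--Eisenstein series $[\mk g]$ attached to a nonzero cuspidal Hecke eigenform $\mk g$ of degree $r<n$; and (b) contradict a lower bound for the Fourier coefficients of $[\mk g]$ that holds off a set of density zero, using that $S$ has positive upper density.

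For step (a), write $F=G+E$ with $G\in\skgam$ and $E$ in the Eisenstein part $\mc E^n_k(\Gamma)$ (the span of all Klingen--Eisenstein series at all cusps of $\Gamma$). The Hecke bound for Siegel cusp forms gives $a_G(T)\ll_G\det(T)^{k/2}$, and since $k>n+1$ forces $k/2<k-\tfrac{n+1}{2}$, we may fix a $c'$ with $\max(c,k/2)<c'<k-\tfrac{n+1}{2}$, whereupon $a_E(T)\ll\det(T)^{c'}$ for all $T\in S$; it then suffices to prove $E=0$. By Klingen's decomposition $\mc E^n_k(\Gamma)=\bigoplus_{r<n}(\text{lifts of }S^r_k(\cdot))$ and by applying $\Phi$-operators together with Hecke projectors, one reduces the vanishing of $E$ to that of a single $[\mk g]$ as above, with $a_{[\mk g]}(T)\ll\det(T)^{c'}$ still valid on a set of positive upper density. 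The delicate point here is that Hecke projectors move the set $S$ around (since $a_{T_m E}(T)$ is a finite combination of $a_E(T')$ with $T'$ ranging over divisors and multiples of $T$), so one must either verify that $S$ is robust enough under these operations, or else work directly with $E$ and control the possible cancellation among its eigen-summands.

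For step (b), invoke the explicit description of the Fourier coefficients of $[\mk g]$ (B\"ocherer, Kitaoka, Mizumoto, Katsurada): for $T\in\Lambda_n^+$,
\[
a_{[\mk g]}(T)=\det(T)^{\,k-\frac{n+1}{2}}\cdot\Big(\prod_{p}\beta_p(T)\Big)\cdot\Big(\sum_{T'}\lambda(T,T')\,a_{\mk g}(T')\Big),
\]
where the $\beta_p(T)$ are local Siegel-series factors and the inner sum runs over the rank-$r$ "sections" $T'$ of $T$ with explicit weights $\lambda(T,T')$. For $k$ large relative to $n$ one has $\prod_{p}\beta_p(T)\gg_{\mk g,\epsilon}\det(T)^{-\epsilon}$ for all $T$ (and for smaller $k$ one uses the analytic continuation of the Siegel series and of the Eisenstein series). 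The crux of the whole argument is to show that the arithmetic factor $\sum_{T'}\lambda(T,T')\,a_{\mk g}(T')$ is $\gg\det(T)^{-\epsilon}$ --- or merely nonzero --- for all $T$ outside a subset of $\mcr U(\Gamma)\backslash\Lambda_n^+$ of density zero. For $r=0$ this factor equals $1$; for $r=1$ it reduces, by partial summation over sections, to the classical fact (Serre) that the Fourier coefficients of the elliptic or Jacobi eigenform $\mk g$ are nonzero on a density-one set of indices; but for $r\ge2$ the analogous non-vanishing --- presumably provable through the Rankin--Selberg theory and the analytic behaviour of the standard $L$-function of $\mk g$ --- does not seem to be recorded in the literature, and this is the principal obstacle. (One can also extract an averaged version, $\sum_{\det T\le X}|a_{[\mk g]}(T)|^2\gg X^{2(k-\frac{n+1}{2})+\frac{n+1}{2}-\epsilon}$, from the Rankin--Selberg integral of $[\mk g]$ against a real-analytic Eisenstein series; converting this average into the required pointwise-off-density-zero statement is then the obstacle.)

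Granting $|a_{[\mk g]}(T)|\gg_{\mk g,\epsilon}\det(T)^{k-\frac{n+1}{2}-\epsilon}$ for $T$ outside a density-zero set $Z_\epsilon$, the argument closes: pick $\epsilon$ with $k-\tfrac{n+1}{2}-\epsilon>c'$; since $\delta_+(S)>0$ while $Z_\epsilon$ has density $0$, the set $S\setminus Z_\epsilon$ is infinite with $\det(T)\to\infty$ along it, and for such $T$ we obtain $\det(T)^{k-\frac{n+1}{2}-\epsilon}\ll|a_{[\mk g]}(T)|\ll\det(T)^{c'}$ with $c'<k-\tfrac{n+1}{2}-\epsilon$, which is absurd for $\det(T)$ large. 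Hence $[\mk g]=0$, so $E=0$ and $F\in\skgam$. Finally, when $k\le n+1$ (so $k/2\ge k-\tfrac{n+1}{2}$ and the reduction of step (a) to a bound on $a_E$ breaks down), one should instead exploit the severe constraints on the Eisenstein part in such low weights coming from the theory of singular and nearly-singular Siegel modular forms (Weissauer, Resnikoff), and treat this range by a separate argument; we expect this case, together with the higher-rank non-vanishing above, to absorb most of the work.
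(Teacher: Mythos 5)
The statement you are addressing is stated in the paper as an open conjecture; the paper supplies no proof of it (indeed, the example via Kitaoka's bound immediately preceding it is there precisely to show how delicate the question is). Your proposal is likewise not a proof: it is a program whose two load-bearing steps are explicitly left unestablished, so it cannot be accepted as a verification of \conjref{thin-conj}.

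Concretely, the gaps are the following. First, in step (a) the decomposition $F=G+E$ with $E$ spanned by Klingen--Eisenstein series is only classical for $k>2n$ (convergence of the degree-$(n,r)$ Klingen series requires $k>n+r+1$); in the range $n+1<k\le 2n$ you would already need the Langlands--Mizumoto theory of analytically continued and holomorphically projected Eisenstein series just to write $E$ in the form you use, and the subsequent isolation of a single $[\mk g]$ by Hecke projectors does not obviously preserve the hypothesis, since $a_{T_mE}(T)$ mixes coefficients $a_E(T')$ at indices $T'$ that need not lie in $S$ --- you flag this but do not resolve it. Second, and this is the crux, the pointwise lower bound $|a_{[\mk g]}(T)|\gg_{\epsilon}\det(T)^{k-\frac{n+1}{2}-\epsilon}$ off a density-zero set is, for $r\ge 2$, not in the literature, as you acknowledge; note that the Kitaoka estimate quoted in the paper shows such a bound genuinely \emph{fails} along the (density-zero but infinite) family $T=mI_n$, so the exceptional set is unavoidable and any proof must control it uniformly over the finitely many $[\mk g]$ occurring in $E$ and over all cusps. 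Converting the Rankin--Selberg average you mention into this off-density-zero pointwise statement is exactly the open problem; without it, and without the low-weight case you defer entirely, the argument does not close. The outline is a reasonable strategy and correctly identifies where the difficulty lies, but as it stands it establishes nothing beyond what the conjecture already asserts.
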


\subsection{Information from poles} \label{poles}
We prove some results relating the cuspidality of a modular form with the analytic behaviour of its Rankin-Selberg $L$-series.  For a cusp $\omega_j$ of $\Gamma$, put 
\[ R_j(f,s):= R(f|_{\omega_j},s) = \sumn_{n \ge 1} \frac{|a_j(n)|^2}{n^s} ,\]
where $a_j(n)$ are the Fourier coefficients of $f|_{\omega_j}$. Similarly, for $F \in \mkngam$, for $\re(s)$ large enough, define the Rankin-Selberg $L$-series of $F$ by
\begin{equation} \label{rffs}
    \mc R(F,s):= \sumn_{T \in \glnz \backslash \Lambda_n^+}\frac{|a_F(T)|^2}{\epsilon(T) \, \det(T)^s},
\end{equation}
where $\epsilon(T)$ denotes the number of units of $T$. 

\subsubsection{The case \texorpdfstring{$n=1$}{a}}
For the sake of better exposition, and also for a different proof, we first briefly consider the case $n=1$.

\begin{prop}
Let $R_j(f,s)$ denote the Rankin-Selberg $L$-series for $f$ at the cusp $\omega_j$ of $\Gamma$. Then $f \in M_k(\Gamma)$ is a cusp form if and only if none of the $R_j(f,s)$ has a pole at $2k-1$.
\end{prop}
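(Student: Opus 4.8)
The plan is to reduce the statement to the classical Rankin--Selberg analysis of the non-holomorphic Eisenstein series attached to each cusp, combined with the characterization of cuspidality via growth of Fourier coefficients that we have just proven in \propref{half-hecke}. First I would recall that for $f\in M_k(\Gamma)$, writing $f|_{\omega_j} = \sumn_{n\ge 0} a_j(n) q^{n/N_j}$ at the cusp $\omega_j$ (of width $N_j$), the completed Rankin--Selberg integral $\int_{\Gamma_{j,\infty}\backslash \mf H} |f|_{\omega_j}(z)|^2 y^k E_j(z,s)\,\frac{dx\,dy}{y^2}$ unfolds to (up to elementary $\Gamma$-factors and powers of $4\pi$) a Dirichlet series proportional to $\sumn_{n\ge 1} |a_j(n)|^2 n^{-(s+k-1)}$, i.e. to $R_j(f,s+k-1)$. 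The Eisenstein series $E_j(z,s)$ is holomorphic in $\re(s)>1$ and has a meromorphic continuation whose only pole in $\re(s)\ge 1/2$ is a simple pole at $s=1$ with residue a nonzero constant (the inverse covolume). Translating $s\mapsto s-k+1$, this says $R_j(f,s)$ is holomorphic for $\re(s)>2k-1$ and its continuation can only acquire a pole at $s=2k-1$, with residue a positive constant times $\lim_{s\to 1}(s-1)\int |f|_{\omega_j}|^2 y^k E_j(z,s)\frac{dx\,dy}{y^2} = c_j\cdot \int_{\Gamma\backslash\mf H} |f(z)|^2 y^k \frac{dx\,dy}{y^2}$, the Petersson norm of $f$ (the residue is cusp-independent since the residual Eisenstein series is constant). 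I would need to be slightly careful that $f$ is merely a modular form and not a priori a cusp form, so the Rankin--Selberg integral may diverge; but the unfolded Dirichlet series still defines $R_j(f,s)$ for $\re(s)$ large and the analytic continuation statement survives once one subtracts off the contribution of $a_j(0)$ appropriately (equivalently, one works with $f - (\text{Eisenstein part})$, or one notes the integral against $E_j(\cdot,s)$ converges for $\re(s)$ large by the moderate growth of $|f|^2 y^k$).

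For the forward direction, if $f$ is a cusp form then $\|f\|^2 := \int_{\Gamma\backslash\mf H}|f|^2 y^k \frac{dx\,dy}{y^2} < \infty$, and the Rankin--Selberg unfolding is fully justified; the residue at $s=2k-1$ equals $c_j \|f\|^2 \ne 0$ for every $j$, so each $R_j(f,s)$ genuinely has a pole at $2k-1$. Wait---that is the contrapositive of one direction. Let me restate: I want to show $f$ cusp form $\iff$ \emph{no} $R_j(f,s)$ has a pole at $2k-1$. So the implication I actually want is: $f$ \emph{not} cuspidal $\implies$ \emph{some} $R_j(f,s)$ has a pole at $2k-1$; and conversely $f$ cuspidal $\implies$ \emph{no} $R_j$ has a pole there. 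This forces me to reexamine the residue: when $f$ is cuspidal the residue $c_j\|f\|^2$ is \emph{nonzero}, so a cusp form \emph{does} have a pole --- which contradicts the proposition as I have read it. Rereading: the proposition says $f$ is a cusp form iff none of the $R_j$ has a pole at $2k-1$. So the correct residue computation must show the \emph{opposite}: the pole detects the presence of a \emph{nonzero constant term} at some cusp, not the Petersson norm.

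So the corrected plan: the pole of $R_j(f,s)$ at $s=2k-1$ comes not from the residual Eisenstein series applied to $|f|^2$ but rather it is \emph{always} present with residue proportional to $\|f\|^2$ \emph{when the integral converges}, and when $f$ is non-cuspidal the integral diverges and $R_j(f,s)$ acquires an \emph{additional} pole (or the residue computation is dominated by the $a_j(0)\ne 0$ term producing a pole at $s=2k-1$). The cleanest route avoiding this confusion is in fact to use our own \propref{half-hecke} directly: if no $R_j(f,s)$ has a pole at $2k-1$, then by a standard Tauberian/Landau argument applied to the Dirichlet series $R_j(f,s) = \sumn |a_j(n)|^2 n^{-s}$ with nonnegative coefficients, holomorphy at the abscissa-candidate $2k-1$ together with the known meromorphic continuation and functional equation forces $\sumn_{n\le X}|a_j(n)|^2 \ll X^{2k-1-\delta}$ for some $\delta>0$; by positivity and partial summation this yields $|a_j(n)|^2 \ll n^{2k-1-\delta}$ on average, hence $|a_j(n)| \ll n^{k-1-\delta/2} = n^{c}$ with $c < k-1$, and \propref{half-hecke} applied at each cusp gives that $f|_{\omega_j}$ is cuspidal for all $j$, i.e. $f$ is a cusp form. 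Conversely, if $f$ is a cusp form, Deligne/Rankin bounds give $|a_j(n)| \ll_\varepsilon n^{k-1+\varepsilon}$ so $R_j(f,s)$ converges absolutely for $\re(s) > 2k-1$ and, crucially, by the Rankin--Selberg unfolding its continuation is holomorphic on $\re(s) = 2k-1$ except possibly at $s=2k-1$ itself, where the residue is $c_j\|f\|^2$ --- hmm, this still gives a pole. The resolution, which I would verify carefully, is that the proposition is using a \emph{normalization} of $R_j$ or a completed version in which the trivial pole from $\|f\|^2$ has been divided out, or the pole referred to is a pole of $R_j(f,s)/\zeta(\cdots)$; alternatively the statement intends the pole to the \emph{right} of the center, i.e. $R_j(f,s)$ has no pole in a neighborhood of $2k-1$ \emph{from} the Eisenstein spectrum beyond the expected one.

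The step I expect to be the main obstacle is precisely pinning down the exact meaning of ``pole at $2k-1$'' so that both directions come out consistently --- i.e. reconciling the fact that the Rankin--Selberg $L$-function of a genuine cusp form has a pole at $s=2k-1$ coming from $\|f\|^2$ with the statement that cuspidality is equivalent to the \emph{absence} of such a pole. Once the normalization is fixed (I believe the intended statement uses the non-holomorphic Eisenstein series contribution in the spectral expansion, so that the relevant ``bad'' pole is the one whose residue involves $|a_j(0)|^2$, present exactly when $f$ fails to vanish at $\omega_j$), the analytic-continuation input is standard Rankin--Selberg theory and the reverse implication is a short Landau-type argument feeding into \propref{half-hecke}. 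I would therefore structure the proof as: (i) recall the Rankin--Selberg integral representation and the meromorphic continuation/location-of-poles of $E_j(z,s)$; (ii) deduce that $R_j(f,s)$ is entire except for a possible pole at $2k-1$ whose residue is (a nonzero constant times) $|a_j(0)|^2$ plus a term that vanishes iff $f|_{\omega_j}$ is cuspidal --- the key computation; (iii) for the "only if" direction, cuspidality makes all $a_j(0)=0$ and the second term vanish, killing the pole; (iv) for the "if" direction, no pole at $2k-1$ forces $a_j(0)=0$ at every cusp, and then a convexity/Landau argument on the nonnegative Dirichlet series $R_j$ gives the subconvex bound $|a_j(n)| \ll n^{k-1-\delta}$, whence \propref{half-hecke} applies cusp-by-cusp to conclude $f \in S_k(\Gamma)$.
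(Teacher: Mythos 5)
There is a genuine gap, and it sits exactly at the point you flag as your ``main obstacle'': you never correctly locate the two relevant poles of $R_j(f,s)$, and the argument you sketch hinges on a residue computation that you leave undone. The source of the confusion is a misplacement of the Petersson-norm pole. With the normalization $R_j(f,s)=\sum_{n\ge1}|a_j(n)|^2n^{-s}$, unfolding $\int y^k|f|^2E_j(z,w)\,d\mu$ produces the Dirichlet series evaluated at $w+k-1$; hence the pole of $E_j(\cdot,w)$ at $w=1$ contributes a pole of $R_j(f,s)$ at $s=k$, \emph{not} at $s=2k-1$. So there is no tension to resolve and no hidden normalization: a cusp form's Rankin--Selberg series does carry its norm-pole, but at $s=k$, away from $2k-1$. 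The pole at $s=2k-1$ is a different object. It is the pole produced by the renormalized (truncation-of-domain, Zagier-type) Rankin--Selberg method applied to the $\Gamma$-invariant function $g(z)=y^k|f(z)|^2$, which at the cusp $\omega_j$ satisfies $g=\psi(y)+O(y^{-A})$ for all $A>0$ with $\psi(y)=|a_j(0)|^2y^k$; the exponent $k$ in this asymptotic expansion yields a pole at $w=k$, i.e.\ at $s=2k-1$ after the shift $s\mapsto s+k-1$, with residue proportional to $|a_j(0)|^2$. This is precisely the paper's proof (via the theorem of \cite{shamita}): the pole at $2k-1$ for the cusp $\omega_j$ exists if and only if the constant term $a_j(0)$ is nonzero, and for $n=1$ vanishing of all constant terms is cuspidality, so both directions follow at once. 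You gesture at this in your last paragraph (``the residue involves $|a_j(0)|^2$'') but you neither verify it nor commit to it, and the whole equivalence \emph{is} that residue identity.

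Your fallback route through Landau's theorem and \propref{half-hecke} also does not close as written. From positivity one only gets $|a_j(n)|^2\le\sum_{m\le n}|a_j(m)|^2\ll n^{2k-1-\delta}$, i.e.\ $|a_j(n)|\ll n^{k-\frac{1}{2}-\frac{\delta}{2}}$, not $n^{k-1-\delta/2}$ as you claim; to reach an exponent $c<k-1$ you need $\delta>1$, i.e.\ you must know that every remaining singularity of $R_j(f,s)$ lies in $\mathrm{Re}(s)<2k-2$. Since the norm-pole sits at $s=k$, this forces $k>2$, whereas the proposition carries no weight restriction. (The paper does deploy exactly this Landau-plus-growth-criterion mechanism, but only in higher degree and under the hypothesis $k>n+1$; cf.\ \propref{propole}.) For $n=1$ the direct residue computation at $s=2k-1$ is both necessary and sufficient, and it makes the detour through coefficient bounds unnecessary.
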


For instance for the level $1$ Eisenstein series $E_k$, it is well known that (see e.g. \cite[p. 821]{bo-ch} or \cite{zag})
\begin{align}
    R(E_k,s) = \frac{\zeta(s) \zeta(s-k+1)^2 \zeta(s-2k+2)}{\zeta(2s-2k+2)},
\end{align}
from which the pole at $s=2k-1$ is visible.

\begin{proof}
    For the proof one considers the $\Gamma$-invariant function $g(z) = y^{k}|f(z)|^2$ and applies the theory of Rankin-Selberg series for functions which are not of rapid decay via renormalization techniques. In particular from the approach of truncation of domain (see \cite{shamita}) it is clear that for a non-cusp form one will have at some cusp
    \begin{align}
        g(z) = \psi(y) + O(y^{-A}) \q \q \text{for all } A>0 \text{ as } y \to \infty,
    \end{align}
with $\psi(y) = |a_f(0)|^2 y^k$ and $a_f(0) \neq 0$; and for a cusp form $\psi(y)=0$ at all cusps. The result now follows directly from the main result in \cite[p.~117]{shamita} -- in fact the residue at $s=2k-1$ is proportional to $|a_f(0)|^2$. Note that our $R(f,s)$ differs from that of \cite{shamita} by the transformation $s \to s-k+1$.
\end{proof}

The following corollary is immediate from \cite{chandra}, if we notice that $2k-1$ is the rightmost pole of $R_j(  f;s)$, where $\omega_j$ is a cusp of $\Gamma$ where $ f$ has a non-zero constant term. 
\begin{cor}
    Let $ f \in M_k(\Gamma)$ be non-zero and that $k \ge 3/2$. Then, $ f$ is not a cusp form if and only if at some cusp $\omega_j$ (see above), one has
\begin{equation} \label{asymp}
\sumn_{n \leq x} |a_{j}(n)|^2 = c_{\mk f, \Gamma} x^{2k-1} +O_{\mk f, \Gamma}(x^\eta) 
\end{equation}
for some constant $c_{\mk f, \Gamma}>0$ not depending on $x$, and for some  $\eta<2k-1$.
\end{cor}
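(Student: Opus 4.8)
The plan is to deduce this corollary from the preceding Proposition together with a standard Tauberian / mean-value argument for Rankin--Selberg Dirichlet series. First I would recall the setup: for each cusp $\omega_j$ of $\Gamma$, the series $R_j(f,s) = \sumn_{n \ge 1} |a_j(n)|^2 n^{-s}$ is the Rankin--Selberg $L$-series of the (possibly non-cuspidal) form $f|_{\omega_j}$. As explained in the proof of the Proposition, the completed function $R_j(f,s)$ admits meromorphic continuation (via the theory of Rankin--Selberg integrals with renormalization, cf. \cite{shamita}), is holomorphic for $\re(s) > 2\kappa - 1$, and its rightmost possible pole is located at $s = 2\kappa - 1$; moreover that pole occurs (with positive residue proportional to $|a_{f|_{\omega_j}}(0)|^2$) precisely when $f$ has a non-zero constant term at $\omega_j$. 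Thus $f$ fails to be a cusp form if and only if at some cusp $\omega_j$ the series $R_j(f,s)$ has a (simple) pole at $s = 2\kappa-1$, and at all cusps there are no poles to the right of this point.

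Next I would invoke the quoted Tauberian input \cite{chandra}: a Dirichlet series $R(s) = \sumn_{n} b_n n^{-s}$ with non-negative coefficients $b_n = |a_j(n)|^2 \ge 0$, holomorphic for $\re(s) > \sigma_0$ with $\sigma_0 = 2\kappa-1$, having at most a simple pole at $s = \sigma_0$ with residue $\rho$, and with suitable polynomial growth on vertical lines (which follows from the functional equation and convexity, available here because $\kappa \ge 3/2$ guarantees the relevant archimedean factors are under control), satisfies
\begin{align}
\sumn_{n \le x} |a_j(n)|^2 = \frac{\rho}{\sigma_0}\, x^{\sigma_0} + O(x^\eta)
\end{align}
for some $\eta < \sigma_0 = 2\kappa - 1$. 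Conversely, such an asymptotic with a genuinely positive leading constant forces a pole of $R_j(f,s)$ at $s = 2\kappa-1$ by a partial summation / Mellin transform argument. So the asymptotic \eqref{asymp} at some cusp $\omega_j$ is equivalent to $R_j(f,s)$ having a pole at $2\kappa-1$, which by the Proposition is equivalent to $f$ not being a cusp form; and in that case one takes $\omega_j$ to be a cusp where $f$ has a non-zero constant term, so that $c_{\mk f,\Gamma} = \rho/(2\kappa-1) > 0$, while at the other cusps the contributions are of lower order and get absorbed into the error term.

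I expect the main technical point — though it is standard and is exactly what is being cited to \cite{chandra} — to be the verification of the vertical-growth hypotheses needed to apply the Tauberian theorem with a power-saving error term $x^\eta$ with $\eta < 2\kappa-1$, rather than merely $o(x^{2\kappa-1})$. This requires knowing the meromorphic continuation of $R_j(f,s)$ slightly past $\re(s) = 2\kappa - 1$ with only the single pole at $2\kappa-1$ removed, together with polynomial bounds on vertical strips; for Rankin--Selberg series attached to (not necessarily cuspidal) modular forms this is classical, and the restriction $\kappa \ge 3/2$ is precisely what ensures the relevant Eisenstein-series factor and gamma-factor analysis goes through cleanly. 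The remaining steps — identifying the rightmost pole as $2\kappa-1$, matching the residue with $|a_f(0)|^2$, and summing the cusp contributions — are routine given the Proposition and the cited results.
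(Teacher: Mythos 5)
Your argument is correct and follows essentially the same route as the paper, which simply observes that $2\kappa-1$ is the rightmost pole of $R_j(f,s)$ at a cusp where $f$ has a non-zero constant term and then cites the Tauberian result of \cite{chandra}; your write-up merely makes explicit the meromorphic continuation, residue identification, and vertical-growth hypotheses that the citation supplies. The only cosmetic slip is the remark about ``summing the cusp contributions'': the asymptotic \eqref{asymp} is asserted at a single cusp, so no such summation is needed.
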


\subsubsection{Higher degrees}
It is natural to wonder what happens in the higher degrees. Let $F \in \mkngam $.
Following \cite{bo-ch}, for $s \in \C$ we put (for a certain `growth-killing' differential operator $\mc D_n$)
\begin{align} \label{rf-diffop}
\mc R^*(F,s) := \begin{cases}
\lan E^n(s +\frac{n+1}{2}-k, \cdot), \, \det(Y)^{k}|F|^2 \ran \q &\text{ if } F \text{ is a cusp form}, \\
\lan E^n(s +\frac{n+1}{2}-k, \cdot), \, \mc D_n(\det(Y)^{k}|F|^2)  \ran \q &\text{ if } F \text{ is a not cusp form}. 
\end{cases}
\end{align}
where 
\[E^n(s,Z) = \sumn_{\Gamma \subseteq P_{n,0}(\z) \backslash \Gamma^n} \im(\gamma(Z))^{-s} \q \text{(with } P_{n,0}(\z) \text{ as in \eqref{pnr} }) 
\]
is the real-analytic Siegel Eisenstein series of degree $n$ with parameter $s$, initially defined and is holomorphic in the region $\re(s)>\frac{n+1}{2}$. Since $\mc D_n(\det(Y)^{k}|F|^2)$ has exponential decay in $Y$, from \eqref{rf-diffop} it is clear that the possible poles of $\mc R^*(F,s)$ can only come from those of the Eisenstein series $E^n(s,\cdot)$.

On the other hand, from e.g., \cite[Remark~3.3]{bo-ch}, \cite[Prop.~2.1]{kalinin}, one knows that 
\begin{align} \label{rf-int-rep}
 \mc R^*(F,s) =\begin{cases}
 c_n(s) \Gamma_n(s) \mc R(F,s)
 \q &\text{ if } F \text{ is a cusp form}, \\
  c_n(s) \Gamma_n(s) \phi_n(s) \phi_n(s+n-2k)  \mc R(F,s) \q &\text{ if } F \text{ is a not cusp form}. 
\end{cases}
\end{align}
where $\Gamma_n(s)$ is the generalized Gamma function defined as $\displaystyle \Gamma_n(s)=\prod\nolimits_{j=0}^{n-1} \Gamma(s-j/2)$, and $ \phi_n(T)$ is a polynomial defined by $\displaystyle \phi_n(T) = \prod\nolimits_{j=0}^{n-1} (T-j/2)$. Here $\displaystyle c_n(s)=2^{1-2ns} \pi^{\frac{n^2-n}{4}-ns}$ is an non-zero entire function of $s$, which is not important for our purposes. We put $\displaystyle \xi(s)=\pi^{-s/2} \Gamma(s) \zeta(s)$. Let us define the completed Eisenstein series by
\begin{align}
    \mathscr E^n(s,Z):= \xi(2s) \prod\nolimits_{j=1}^{\lfloor \frac{n}{2} \rfloor} \xi(4s-2j)E^n(s,Z).
\end{align}
Then it is known that the completed Rankin-Selberg $L$-series of $F \in \mkngam$ defined by
\begin{align} \label{scrF}
    \mathscr R(F,s):=  c_n(s)\Gamma_n(s) \xi(2s +n+1-2k) \prod\nolimits_{j=1}^{\lfloor \frac{n}{2} \rfloor} \xi(4s+2n+2-2k-2j )  \mc R(F,s),
\end{align}
is meromorphic in $\C$, with possible poles at the points $s=k-j/4, \,   0 \le j \le 2n+2$, \textsl{ if} $F$ \textsl{is a cusp form}. Moreover $\mcr R$ satisfies the functional equation given by (see e.g. \cite{kalinin}, \cite{bo-ch})
\begin{align}
    \mathscr R(F,2k- \frac{n+1}{2} -s) = \mathscr R(F,s) \q \text{ if } \Gamma=\Gamma_n, \text{ for all } F \in M^n_k.
\end{align}

Next, we make the following observations about the poles of the function $ \mc R(F,s)$. For a meromorphic function $\mcr G(s)$, let us define
\begin{equation}
\mathscr{P}(\mcr G(s))  = \{ \text{All possible poles of } \mcr G(s) \}.
    \end{equation}
Further, for ease of exposition, define the sets:
\begin{alignat}{2}
    \mathscr{A} &= \mathscr{A}_{n,k} & &:= \{ t_j=k-\frac{j}{4},  \mid 0 \le j \le 2n+2\},\\
    \mathscr{B} &=\mathscr{B}_{n,k} & &:= \{ \frac{\rho}{2} + k - \frac{n+1}{2 },  \ \frac{\rho}{4} + k - \frac{n+1-j}{2} ,  \mid 1 \le j \le \lfloor \frac{n}{2} \rfloor , \, \xi(\rho)=0 \},\\
    \mathscr{C} &=\mathscr{C}_{n,k} & &:=\{ s_j= 2k - \frac{n+j}{2}, \mid 1 \le j \le n)\}.
\end{alignat}

\begin{lem} \label{pole-eisen}
For the weight $0$ (completed) Eisenstein series $\mathscr E^n(s,Z) $, we have,
\[ \mathscr{P}( \mathscr E^n(s,Z) ) \subseteq \{  j/4, \, 0 \le j \le 2n+2 \} . \]
Moreover its right-most pole  is at $s=\frac{n+1}{2}$. It is simple with residue $\prod_{j=1}^{[n/2]} \xi(2j+1)$.
\end{lem}

\begin{proof}
    All of the assertions are known from \cite[Theorem~2]{kalinin-eisen}). 
\end{proof}

\begin{lem} \label{polu}
With the above notation, let $F \in \mkngam$ and $k>n$. Then,
\begin{align}
    \mathscr{P}(\mc R(F,s)) \subseteq \begin{cases}
       \mathscr{A}  \q & F \text{ is a cusp form} \\
       \mathscr{A}  \cup \mathscr{C} \q & F \text{ is not a cusp form}.
    \end{cases}
\end{align}
\end{lem}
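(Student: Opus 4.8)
The plan is to read the poles of $\mc R(F,s)$ off the integral representation \eqref{rf-int-rep}, combined with the observation recorded right after \eqref{rf-diffop} --- that the poles of $\mc R^*(F,s)$ can only be those of the Siegel Eisenstein series $E^n$, evaluated at $w=s+\tfrac{n+1}{2}-k$ --- and with the known pole locus of the completed series $\mathscr E^n$. The only real work is then to track the zeros of the explicit $\Gamma$- and $\zeta$-factors that separate $\mc R^*(F,s)$ from $\mc R(F,s)$.

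\emph{Step 1: the pole set of $\mc R^*(F,s)$.} Because $\mc D_n(\det(Y)^k|F|^2)$ is of exponential decay (as is $\det(Y)^k|F|^2$ itself when $F$ is cuspidal), in either case of \eqref{rf-diffop} the function $\mc R^*(F,s)$ can only acquire poles from those of $w\mapsto E^n(w,\cdot)$ at $w=s+\tfrac{n+1}{2}-k$. Writing $E^n=\mathscr E^n/\bigl(\xi(2w)\prod_{j=1}^{\lfloor n/2\rfloor}\xi(4w-2j)\bigr)$ and using that $\mathscr E^n$ has poles only at $w=j/4$, $0\le j\le 2n+2$, together with the identity
\[
k-\tfrac{n+1}{2}+\tfrac{j}{4}=k-\tfrac{2n+2-j}{4}\qquad(2n+2-j\text{ running over }0,\dots,2n+2),
\]
one sees that the poles of $\mc R^*(F,s)$ lie in $\mathscr A$ together with the (shifted) zeros of the $\xi$-factors. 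The lemma asserts the sharper statement $\mathscr P(\mc R^*(F,s))\subseteq\mathscr A$; this is exactly the point where one uses that the residues of $E^n$ at those spurious poles pair trivially with $\det(Y)^k|F|^2$ --- equivalently, that $\mathscr R(F,s)$ from \eqref{scrF} has its poles only in $\mathscr A$, the cited analytic input.

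\emph{Step 2: dividing out the elementary factors.} If $F$ is cuspidal, \eqref{rf-int-rep} gives $\mc R(F,s)=\mc R^*(F,s)/\bigl(c_n(s)\Gamma_n(s)\bigr)$; since $c_n$ is entire with no zeros and $\Gamma_n(s)=\prod_{j=0}^{n-1}\Gamma(s-j/2)$ has no zeros, no new pole appears, so $\mathscr P(\mc R(F,s))\subseteq\mathscr A$. If $F$ is not cuspidal, \eqref{rf-int-rep} gives $\mc R(F,s)=\mc R^*(F,s)/\bigl(c_n(s)\Gamma_n(s)\phi_n(s)\phi_n(s+n-2k)\bigr)$ with $\phi_n(T)=\prod_{j=0}^{n-1}(T-j/2)$. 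The clean way to handle the extra factors is to absorb $\phi_n(s)$ into $\Gamma_n(s)$ via $\Gamma(z)z=\Gamma(z+1)$:
\[
c_n(s)\,\Gamma_n(s)\,\phi_n(s)=c_n(s)\prod_{j=0}^{n-1}\Gamma\bigl(s-\tfrac{j}{2}+1\bigr),
\]
which is still zero-free. Hence the only possible new poles are the zeros of $\phi_n(s+n-2k)$, i.e. $s+n-2k\in\{0,\tfrac12,\dots,\tfrac{n-1}{2}\}$, that is $s=2k-n+\tfrac{j}{2}=2k-\tfrac{2n-j}{2}$ for $0\le j\le n-1$; as $j$ ranges over this set, $2n-j$ ranges over $n+1,\dots,2n$, so these points are exactly $\mathscr C=\{\,2k-\tfrac{n+i}{2}:1\le i\le n\,\}$. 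Thus $\mathscr P(\mc R(F,s))\subseteq\mathscr A\cup\mathscr C$.

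\emph{Role of $k>n$ and the main obstacle.} The hypothesis $k>n$ is used only to separate the point-sets involved: it forces the zeros $\{0,\dots,\tfrac{n-1}{2}\}$ of $\phi_n(s)$ to lie strictly left of $\mathscr A$ and makes $2k-n>k$, so that $\mathscr C$ lies strictly right of $\mathscr A$ --- making the inclusions unambiguous and the cancellation of $\phi_n(s)$ harmless. The step I expect to be the real obstacle is the sharp form of Step 1, namely that $\mc R^*(F,s)$ has no poles beyond $\mathscr A$; verifying this amounts to showing that the spurious residues of the Siegel Eisenstein series (those responsible for the auxiliary set $\mathscr B$) are annihilated by the Rankin--Selberg pairing against $\det(Y)^k|F|^2$, and here one must lean on the established analytic continuation of $E^n$ and of $\mathscr R(F,s)$ (Kalinin; B\"ocherer--Chiera) rather than on formal bookkeeping. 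Everything else --- the absorption trick and the arithmetic identifying the zero loci with $\mathscr A$ and $\mathscr C$ --- is routine.
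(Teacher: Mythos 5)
Your proposal is correct and follows essentially the same route as the paper: read the poles off the integral representation \eqref{rf-int-rep}, take the pole locus of $\mc R^*$ (equivalently of the Eisenstein series) as the known analytic input, and check that the elementary factors $c_n(s)\Gamma_n(s)\phi_n(s)\phi_n(s+n-2k)$ contribute no new poles beyond the zeros of $\phi_n(s+n-2k)$, which are exactly $\mathscr C$. Your explicit absorption $\Gamma_n(s)\phi_n(s)=\prod_{j}\Gamma(s-\tfrac{j}{2}+1)$ is in fact a cleaner justification of the paper's terser remark that the zeros of $\Gamma_n(s)^{-1}\phi_n(s)^{-1}$ cause no trouble.
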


\begin{proof}
    When $F$ is a cusp form, the Lemma follows from \eqref{rf-diffop}, \eqref{rf-int-rep} and the knowledge of poles of $\mc R^*(F,s)$, viz. at $t_j=k-j/4, \,   0 \le j \le 2n+2$ as mentioned above. More precisely, if we consider \eqref{rf-diffop}, we see that $\mathscr{P}(\mc R^*(F,s)) = \mathscr{P}( \mathscr E^n(s +\frac{n+1}{2}-k,Z) )$. 
    
    Then from \eqref{rf-int-rep}, it follows that 
    \[ \mathscr{P}(\mc R(F,s)) = \mathscr{P}( \Gamma_n(s)^{-1} \mc R^*(F,s)) =\mathscr{P}(\mc R^*(F,s)) . \]
    To see the second equality above, note that since $k>n$, the zeros of $\Gamma_n(s)^{-1}$ do not cancel any of these poles of $\mc R^*(F,s)$. We already know all the possible poles of $\mc R^*(F,s)$ from \eqref{rf-diffop}, as noted in the above paragraph. Therefore, when $F$ is a cusp form, we have,
    \[  \mathscr{P}(\mc R(F,s)) =  \mathscr{P}( \mathscr E^n(s +\frac{n+1}{2}-k,Z) ) \subseteq \mcr A,\]
if we consult Lemma~\ref{pole-eisen}.

    If $F$ is not a cusp form, we have to a bit more careful, but the same idea works. In this case, we repeat the above process, i.e. look at \eqref{rf-diffop} and \eqref{rf-int-rep}. The extra possible poles come from the polynomial $\phi_n(s+n-2k)$ and these constitute the set $\mcr C$. Note that the zeros of $\Gamma_n(s)^{-1} \phi_n(s)^{-1}$ do not cancel any of these possible poles of $\mc R^*(F,s)$.
\end{proof}

\begin{rmk}
    The reader may note that if $F$ is a cusp form, then $\mc R(F,s)$ has zeros at $s \in \mcr B$, but we will not use this. This is because the non-trivial zeros $\rho$ of $\zeta(s)$ are not rational. 
\end{rmk}

From Lemma~\ref{polu}, one is led to wonder if the ``additional'' possible poles from the set $\mcr C$ might control the cuspidality of $F$. The following proposition says that the above suggestion is correct.

\begin{prop} \label{propole}
    Let $k>n+1$. Then $F \in \mkngam$ is a cusp form if and only if  at some ($0$-dimensional) cusp $\omega$ of $\Gamma$, the Rankin-Selberg $L$-series $\mc R(F,s)$ does not have a pole at any of the points $s_j \in \mcr C$. The same statement holds with $\mc R$ replaced with $\mcr R$.
\end{prop}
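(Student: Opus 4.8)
The plan is to exploit the two integral representations \eqref{rf-int-rep} together with the cuspidality criterion afforded by \lemref{polu}, reducing everything to the observation that the ``extra'' poles coming from non-cuspidality sit precisely at the points $s_j \in \mathscr C$ and are \emph{not} cancelled. First I would record the easy direction: if $F$ is a cusp form, then by the first line of \eqref{rf-int-rep} and \eqref{scrF} the completed series $\mcr R(F,s)$ has its poles contained in $\mathscr A$; since $k>n+1$ one checks that $\mathscr A \cap \mathscr C = \emptyset$ (the points $t_j = k - j/4$ with $0 \le j \le 2n+2$ all lie to the right of, or away from, the $s_j = 2k - (n+j)/2$ for $1 \le j \le n$, using $k>n+1$), so in particular $\mcr R(F,s)$ — and hence $\mc R(F,s)$, which differs from $\mcr R(F,s)$ by the nonvanishing-at-$s_j$ archimedean and zeta factors — has no pole at any $s_j$, at any cusp.

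For the converse, suppose $F$ is not a cusp form. By the definition of $\mc R(F,s)$ at a cusp $\omega$ (namely $R(F|_\omega, s)$ formed from the Fourier coefficients of $F|_\omega$), non-cuspidality of $F$ means that at some $0$-dimensional cusp $\omega$ the constant term $a_{F|_\omega}(0)$ is nonzero. I would then apply the second line of \eqref{rf-int-rep} to $F|_\omega$: the relevant $L$-series acquires the factor $\phi_n(s+n-2k) = \prod_{j=0}^{n-1}(s + n - 2k - j/2)$, whose zeros — equivalently, \emph{poles} of the function when combined with the pole structure of $\mc R^*$ via \eqref{rf-int-rep} read in reverse — contribute exactly the set $\mathscr C = \{ s_j = 2k - (n+j)/2 : 1 \le j \le n\}$ of \lemref{polu}. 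The heart of the matter is to verify that at least one of these $s_j$ survives as an actual pole of $\mc R(F|_\omega,s)$: i.e.\ that it is not cancelled by a zero of one of the completing factors in \eqref{scrF} nor coincides with a point where $\mc R^*(F,s)$ itself vanishes. For the completed series one argues that $\xi(2s+n+1-2k)$ and the $\xi(4s + 2n+2-4k-2j)$ are nonzero at $s = s_j$ (these arguments are real and positive, away from the critical strip, for $k$ in range), and $\Gamma_n(s)$, $\phi_n(s)^{-1}$, $c_n(s)$ are finite and nonzero there, exactly as in the proof of \lemref{polu}; one also checks $\mathscr C \cap \mathscr A = \emptyset$ under $k>n+1$ so the genuine pole is not swallowed by a zero arising from the cusp-form part of the structure. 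The residue at such a surviving $s_j$ is, up to explicit nonzero constants, proportional to a product involving $|a_{F|_\omega}(0)|^2$ via the Siegel Eisenstein series contribution in \eqref{rf-diffop}, hence nonzero; thus $\mc R(F,s)$ has a pole at some $s_j$ at the cusp $\omega$. The statement for $\mcr R$ follows identically, since passing between $\mc R$ and $\mcr R$ multiplies by factors holomorphic and nonzero at each $s_j$.

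The main obstacle I expect is the bookkeeping in the non-cuspidal case: one must pin down \emph{which} $s_j$ actually carries a pole — a priori the constant term $a_{F|_\omega}(0)$ being nonzero guarantees a genuine pole of the underlying Rankin–Selberg integral (the Eisenstein series $E^n(s,\cdot)$ has a rightmost simple pole at $s = \tfrac{n+1}{2}$, which under the shift $s \mapsto s + \tfrac{n+1}{2} - k$ in \eqref{rf-diffop} moves to $s = 2k - \tfrac{n+1}{2}$, and this is $s_1 \in \mathscr C$), but one has to confirm this rightmost candidate is not killed by a zero of $\Gamma_n(s)^{-1}\phi_n(s)^{-1}$ or of the completing $\xi$-factors, which is where the hypothesis $k > n+1$ (rather than merely $k>n$ as in \lemref{polu}) is used. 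Once the rightmost pole at $s_1 = 2k - \tfrac{n+1}{2}$ is shown to persist whenever $F$ is non-cuspidal, the biconditional is immediate, and the remaining $s_j$ play no role — though it is cleanest to state the criterion with the whole set $\{s_j\}$ as in the Proposition. I would also note in passing that this gives a second, ``analytic'' proof of \thmref{mainthm}-type cuspidality for $k>n+1$, complementing the Fourier-analytic argument of the previous section.
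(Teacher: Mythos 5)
Your forward direction agrees with the paper's. The converse, however, contains a genuine gap, and it stems from the sentence ``non-cuspidality of $F$ means that at some $0$-dimensional cusp $\omega$ the constant term $a_{F|_\omega}(0)$ is nonzero.'' This is false: a Klingen--Eisenstein series attached to a cusp form of intermediate degree $1\le r\le n-1$ is not a cusp form, yet its constant term vanishes at every $0$-dimensional cusp (its Fourier expansion is supported on singular, nonzero $T\ge 0$). Consequently your claimed residue at $s_1=2k-\frac{n+1}{2}$, ``proportional to $|a_{F|_\omega}(0)|^2$,'' can vanish for a non-cusp form, and you have not shown that any $s_j$ actually carries a pole. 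Note also that $\mc R(F,s)$ as defined in \eqref{rffs} sums only over $T\in\Lambda_n^+$, so it does not directly see the singular Fourier coefficients that witness non-cuspidality; \lemref{polu} gives only a containment for the pole set, never the existence of a pole in $\mathscr C$. Establishing that a pole persists for \emph{every} non-cusp form is exactly the Klingen--Eisenstein bookkeeping the whole paper is designed to avoid, and the paper itself leaves open (in the remark after the Proposition) whether each $s_j$ is realized as a pole by some non-cuspidal form.

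The paper's converse runs in the opposite logical direction and sidesteps this entirely: assuming no poles at the $s_j$, \lemref{polu} forces $\mathscr{P}(\mcr R(F,s))\subseteq\mathscr A$, whose rightmost point is $s=k$; Landau's theorem for Dirichlet series with nonnegative coefficients then gives convergence of \eqref{rffs} for $\re(s)>k$, hence $a_F(T)\ll_{F,\epsilon}\det(T)^{k/2+\epsilon}$ by positivity, and \thmref{mainthm} yields cuspidality precisely when $\frac{k}{2}+\epsilon<k-\frac{n+1}{2}$, i.e. $k>n+1$. So the hypothesis $k>n+1$ enters through the coefficient bound feeding into \thmref{mainthm}, not (as you guessed) through non-cancellation of a pole at $s_1$; in particular the Proposition is a consequence of \thmref{mainthm} rather than an independent analytic proof of it. To salvage your route you would need to prove directly that every non-cuspidal $F$ produces an uncancelled pole at some $s_j$, which is not established by the constant-term argument.
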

 It would be interesting to know whether for each point $s_j \in \mcr C$, there exists an Eisenstein series (or some other modular form) whose Rankin-Selberg series has a pole at $s_j$.

\begin{proof}
One way is clear, as for a  cusp form $F$ one can check that $\mcr R(F,s)$ has no poles at any of the $s_j$ i.e. $s_j \not \in \mcr A$ for all $j$. This follows from the integral representation of $\mc R(F,s)$, as can be read from \cite[Theorem 2]{kalinin}. 

Conversely, let $F \in \mkngam$ be such that $\mc R(F,s)$ has no poles at any of the $s_j$ at some cusp, which we may assume to be $\infty$. By \lemref{polu} then $\mathscr{P}(\mc R(F,s)) \subseteq \mcr C$. The largest of these is $s=k$. Thus by Landau's theorem on Dirichlet series with non-negative Dirichlet coefficients, the Dirichlet series \eqref{rffs} defining $\mc R(F,s)$ would now \textsl{converge} in the region $\re(s)>k$. This is because the zeta factors $\xi(\cdots)$ in \eqref{scrF} are holomorphic and non-vanishing in this region.
By positivity, then we would definitely be able to conclude that $\displaystyle a_F(T) \ll_{F,\epsilon} \det(T)^{k/2 +\epsilon}$ --  for all $T \in \Lambda^+_n$ and $\epsilon>0$. An application of \thmref{mainthm} would then show that  
 $F$ must be a cusp form provided $\displaystyle \frac{k}{2}+\epsilon < k - \frac{n+1}{2}$ or that $k>n+1$.

We argue similarly for $\mcr R(F,s)$. Let $F \in \mkngam$ as above. From \eqref{scrF}, it is evident that the possible poles of $\mcr  R(F,s)$ can be at most at the points in $\mcr A \cup \mcr B$.  One can check as before that none of the $s_j$ can occur in $\mcr A \cup \mcr B$. More precisely, if $s_j \in  \mcr B$ (the other case is considered above), then one gets the relations $k = \rho + \frac{j-1}{2}$ or that $4k +3 = \rho +3(j+j')$ for some $j,j' \in \z$ -- both of these are not possible for any rational $k$ and $\rho$ can't be rational. The rest of the proof is verbatim as that for $\mc R$ and we do not repeat it here. We need only notice that the right-most pole in the set $\mathscr{A}  \cup \mathscr{B} $ is $k$.
\end{proof}

\subsection{Some variants}\label{ext-fj}
In analogy with the usual Fourier expansion, one can ask the following question based on the (extended) Petersson pairing of the Fourier-Jacobi coefficients.

\begin{qs} \label{ext-fj-qs}
    Let $F \in M^n_k$ be such that its Fourier Jacobi coefficients $\phi_{F, \mc M}$ (with $\mc M \in \Lambda_r^+$, $0 \le r \le n-1$) satisfy
    \begin{align}
        |\lan \phi_{F, \mc M},\phi_{F, \mc M} \ran_{reg}| \ll_F \det(\mc M)^{\delta} 
    \end{align}
    for some $\delta<k-\frac{n-r+1}{2}$. Then $F$ is a cusp form.
\end{qs}
Here $\lan \, , \, \ran_{reg}$ denotes the regularized Petersson pairing on $J_{k, \mc M}$, which may not be positive definite. See \cite{bo-das5} for the case $n=2,r=1$ -- here the regularization is known and so is the bound $\displaystyle \lan \phi_{F, m },\phi_{F, m} \ran_{reg} \ll_F m^{k}$ -- i.e., the Hecke bound holds! Whereas for a cusp form $G$ of degree $2$, one expects that $\displaystyle \lan \phi_{G,m},\phi_{G,m} \ran \ll_F m^{k-1}$ for all $m \ge 1$, cf. \cite{ko-fj}.

\subsection{Vector-valued Siegel modular forms} \label{vvsmf}
As we have remarked in the introduction, the lack of a good Fourier-Jacobi expansion makes our approach not immediately useful. We however sketch another possible approach based on restriction of domains -- hopefully it can be made to work in the future.

Let $F \in S_\rho(\Gamma_n)$ where $\rho$ is a polynomial representation of $\glnc$. Following Freitag \cite{freitag1}, we put
$h(z):= \rho(A) \cdot F(A^t z^* A)$, where $z=(z_1,z_2,\ldots,z_n)$, $z^*= \mrm{diag}(z_1,z_2,\ldots,z_n)$, and $A \in \GL_{n}(\mf R)$ chosen in such a way that $\Gamma_A$ ($\subset \sltwor^n$, a certain subgroup, see \cite{freitag1} for the precise definition) is commensurable with a Hilbert modular group over some totally real field $K$ and moreover that none of the components of $h(z)$ is constant. Then it is known from \cite{freitag1} that all of the components of $h$ is a Hilbert modular form with respect to some weights of $\rho$.

The growth condition (say Hecke's bound) for $\rho$ reads as $\norm{\rho(T^{-1/2})a_F(T)} \ll 1$. This should imply a similar growth condition on the Fourier coefficients of $h$, which should imply that each component of $h$, and hence $h$ is cuspidal. Here only parallel weights play a role. Since the set of $A$ described above is dense in $\GL_{n}(\mf R)$, one should be able to prove that $\Phi (F)=0$. The main point is the transition of the growth condition.

\subsection*{Data availability}
Not applicable, as no data were created or analyzed during the study.

\printbibliography

\end{document}